\def\C {{\mathcal C}}
\def\H {{\mathcal H}}
\def\M {{\mathcal M}}
\def\A {{\mathbb A}}
\def\R {\mathbb{R}}
\def\N {\mathbb{N}}
\def\D {{\mathfrak D}}
\def\II {{\mathbb S}}
\def\Re {\mathfrak{Re\,}}
\def\eps{\varepsilon}
\def\e{{\rm e}}
\def\d{{\rm d}}
\def\ddt{\frac{\d}{\d t}}
\def\ii{{\rm i}}
\def \l {\langle}
\def \r {\rangle}
\def \tx {\textstyle}
\def\uts {{\sc uts}}
\def \and {{\qquad\text{and}\qquad}}
\newtheorem{proposition}{Proposition}
\newtheorem{theorem}[proposition]{Theorem}
\newtheorem{lemma}[proposition]{Lemma}
\theoremstyle{definition}
\newtheorem*{remark}{Remark}
\numberwithin{equation}{section}
\def \au {\rm}
\def \ti {\it}
\def \jou {\rm}
\def \no#1#2#3 {{\bf #1} (#3), #2.}
\def \eds#1#2#3 {#1, #2, #3.}
\title[Timoshenko systems with fading memory]
{Timoshenko systems with fading memory}
\author[M. Conti, F. Dell'Oro and V. Pata]
{Monica Conti, Filippo Dell'Oro and Vittorino Pata}
\address{Politecnico di Milano - Dipartimento di Matematica ``F.\ Brioschi''
\newline\indent
Via Bonardi 9, 20133 Milano, Italy}
\email{monica.conti@polimi.it}
\email{filippo.delloro@mail.polimi.it}
\email{vittorino.pata@polimi.it}
\keywords{Timoshenko system, fading memory, contraction semigroup, exponential stability.}
\begin{document}

\begin{abstract}
The decay properties of the semigroup
generated by a linear Timoshenko system with fading memory
are discussed.
Uniform stability is shown to occur within a necessary and
sufficient condition on the memory kernel
$\mu$.
\end{abstract}

\maketitle

\section{Introduction}

\noindent
Given a real interval ${\mathfrak I}=[0,\ell]$,
we consider the viscoelastic beam model of Timoshenko type \cite{TIM}
\begin{equation}
\label{TIMO0}
\begin{cases}
\rho_1 \varphi_{tt} -\kappa(\varphi_x +\psi)_x = 0\\
\noalign{\vskip1.5mm}
\displaystyle
\rho_2 \psi_{tt} -b\psi_{xx} +\int_0^\infty \mu(s)\psi_{xx}(t-s)\,\d s+\kappa(\varphi_x +\psi)= 0
\end{cases}
\end{equation}
in the unknowns
$\varphi,\psi: \,(x,t)\in {\mathfrak I}\times [0,\infty) \mapsto \R$, where the strictly
positive constants
$\rho_1,\rho_2,\kappa,b$ satisfy the relation
\begin{equation}
\label{rivera}
\frac{\rho_1}{\kappa}=\frac{\rho_2}{b}
\end{equation}
while the memory kernel $\mu$ is a (nonnegative) nonincreasing absolutely continuous function
on $[0,\infty)$ such that
$$
a:=b-m>0\qquad
\text{where}\qquad
m:=\int_0^\infty \mu(s)\,\d s>0.
$$
The system is complemented with
the Dirichlet boundary conditions
$$
\varphi(0,t)=\varphi(\ell,t)=\psi(0,t)=\psi(\ell,t)=0,
$$
but our arguments can be used to prove analogous results for other kind of boundary
conditions as well, such as Dirichlet/Neumann or Neumann/Dirichlet.

Following \cite{MRFS}, by rephrasing the problem within the history framework of Dafermos~\cite{DAF},
system \eqref{TIMO0} is shown to generate a contraction semigroup $S(t)$ of solutions
acting on a suitable Hilbert space $\H$
accounting for the presence of the memory.
The aim of this work is to establish a necessary and sufficient condition
on the memory kernel $\mu$ (within the class of kernels considered above)
in order for $S(t)$ to be exponentially stable on $\H$, namely,
$$\|S(t)z\|_{\H}\leq K\e^{-\omega t}\|z\|_\H,\quad\forall z\in\H,$$
for some $K\geq 1$ and $\omega>0$.
Our main theorem reads as follows.

\begin{theorem}
\label{MAIN}
The semigroup $S(t)$ is exponentially stable if and only if
there exist $C\geq 1$ and $\delta>0$ such that
\begin{equation}
\label{NEC}
\mu(\sigma+s)\leq C \e^{-\delta \sigma}\mu(s)
\end{equation}
for every $\sigma\geq 0$ and $s>0$.
\end{theorem}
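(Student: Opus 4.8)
The plan is to characterize exponential stability through the Gearhart--Pr\"uss resolvent criterion: since $S(t)$ is a contraction semigroup on the Hilbert space $\H$, with infinitesimal generator $\A$, it is exponentially stable if and only if $\ii\R\subset\rho(\A)$ and
$$\sup_{\lambda\in\R}\big\|(\ii\lambda-\A)^{-1}\big\|_{\mathcal{L}(\H)}<\infty.$$
Throughout I work in the Dafermos history setting, writing the state as $z=(\varphi,\tilde\varphi,\psi,\tilde\psi,\eta)$ with $\eta^t(s)=\psi(t)-\psi(t-s)$, so that the memory integral becomes $m\psi_{xx}-\int_0^\infty\mu(s)\eta_{xx}(s)\,\d s$ and the second equation is governed by the effective elastic coefficient $a=b-m$. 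The only dissipation available is the memory one, $\tfrac{\d}{\d t}\|S(t)z\|_\H^2=\int_0^\infty\mu'(s)\|\eta_x(s)\|^2\,\d s\le0$, which acts solely on the history component; the whole difficulty is to propagate it to the remaining variables, and here the equal wave speed relation \eqref{rivera} is indispensable.

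For the \emph{sufficiency} direction I would assume \eqref{NEC} and establish the two resolvent conditions. Fixing $f\in\H$ and solving $\ii\lambda z-\A z=f$, the identity $\Re\langle\ii\lambda z-\A z,z\rangle=\Re\langle f,z\rangle$ immediately controls the dissipation integral $-\int_0^\infty\mu'(s)\|\eta_x(s)\|^2\,\d s$ by $\|f\|_\H\|z\|_\H$. The first key step is to convert this weak (and possibly degenerate) dissipation into control of the full history norm $\int_0^\infty\mu(s)\|\eta_x(s)\|^2\,\d s$: solving the history equation $\ii\lambda\eta+\eta_s=\tilde\psi+\hat f$ explicitly in $s$ and inserting the result into the weighted integral, condition \eqref{NEC} is exactly what is needed to dominate the arising convolution kernels and bound this norm by $\|\tilde\psi\|^2$ plus harmless terms. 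The second key step is a chain of multipliers --- testing the $\psi$-equation against $\psi$ and against the auxiliary quantity $\int_0^\infty\mu(s)\eta(s)\,\d s$, testing the $\varphi$-equation against $\varphi_x+\psi$, and crucially coupling the two equations through \eqref{rivera} to eliminate the obstructive cross terms --- to recover $\|\tilde\psi\|^2$, $\|\psi_x\|^2$, $\|\tilde\varphi\|^2$ and $\|\varphi_x+\psi\|^2$. Combining these estimates yields $\|z\|_\H\le M\|f\|_\H$ uniformly in $\lambda$, which gives both the uniform bound and, arguing similarly to exclude eigenvalues and surjectivity defects, the inclusion $\ii\R\subset\rho(\A)$.

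For the \emph{necessity} direction I would argue by contraposition. If \eqref{NEC} fails, then for every $C,\delta$ there are $\sigma,s$ with $\mu(\sigma+s)>C\e^{-\delta\sigma}\mu(s)$, meaning the kernel retains non-negligible mass arbitrarily far along its tail relative to earlier values. I would exploit this to defeat the resolvent bound: choosing data supported purely in the history component and concentrated near such bad pairs $(\sigma,s)$, the translation character of the history flow keeps the energy essentially undissipated over long intervals, so that a suitable sequence $z_n\in\H$ and $\lambda_n\in\R$ violates $\sup_\lambda\|(\ii\lambda-\A)^{-1}\|<\infty$; by Gearhart--Pr\"uss this rules out exponential stability. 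An alternative is to build explicit approximate solutions of the history equation that decay no faster than the slowly decaying portions of $\mu$ permit.

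I expect the main obstacle to be the first step of the sufficiency argument: extracting control of the genuine history norm $\int_0^\infty\mu\,\|\eta_x\|^2\,\d s$ from the degenerate dissipation $\int_0^\infty\mu'\,\|\eta_x\|^2\,\d s$, since \eqref{NEC} is only a one-sided comparison and provides no pointwise lower bound on $-\mu'/\mu$. Closing this gap --- and then threading the resulting estimate through the Timoshenko multiplier chain without sacrificing the cancellations afforded by \eqref{rivera} --- is the delicate heart of the proof.
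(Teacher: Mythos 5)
Your overall frame for the sufficiency direction matches the paper's: both rest on the Gearhart--Pr\"uss resolvent criterion (Lemma~\ref{pruss}), the explicit integration of the history equation in $s$, condition \eqref{NEC} entering through a convolution estimate (Lemma~\ref{lemmaTEO} in the paper), and the equal-wave-speed relation \eqref{rivera} to kill the cross terms. But the step you yourself flag as the ``delicate heart'' --- passing from the degenerate dissipation $-\int_0^\infty\mu'(s)\|\eta_x(s)\|^2\,\d s$ to control of the state --- is exactly the step you do not supply, and it is not closed by bounding the history norm by $\|\tilde\psi\|^2$ as you propose; that inequality goes in the unhelpful direction. The paper's mechanism is different and essential: since $\mu$ is absolutely continuous and vanishes at infinity, the set $\II=\{s:K\mu'(s)+\mu(s)<0\}$ has positive measure for large $K$, so the dissipation controls the \emph{restricted} norm $\int_\II\mu(s)\|\eta_{nx}(s)\|^2\,\d s\to0$; then the explicit formula $\eta_n(s)=\frac{1}{\ii\lambda_n}(1-\e^{-\ii\lambda_n s})\tilde\psi_n+\int_0^s\e^{-\ii\lambda_n(s-\sigma)}\xi_n(\sigma)\,\d\sigma$ is paired against $\Psi_n$ with $-\Psi_{nxx}=\tilde\psi_n$, and the coefficient $\alpha_n=\int_\II\mu(s)(1-\e^{-\ii\lambda_n s})\,\d s$ is shown to stay away from zero (Riemann--Lebesgue when $|\lambda_n|\to\infty$), which is what extracts $\|\tilde\psi_n\|\to0$ and only \emph{afterwards} $\|\psi_{nx}\|\to0$ and $\|\eta_n\|_\M\to0$. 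Without this (or an equivalent device) your multiplier chain cannot start. You also omit entirely the boundary-term lemma (Lemma~\ref{bordo}): with Dirichlet conditions on both $\varphi$ and $\psi$, the multiplication of the $\psi$-equation by $\varphi_{nx}+\psi_n$ produces the term $F_n\overline{\varphi_{nx}}\big|_0^\ell$, which must be shown to vanish by a separate $q$-multiplier argument; this is a nontrivial piece of the proof.

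For necessity, your contraposition plan (build bad data when \eqref{NEC} fails) is workable in principle but much harder than what the paper does, and as sketched it has a gap: data ``supported purely in the history component'' do not evolve by pure translation, because the history feeds back into the $\psi$-equation, so you must control that coupling before you can assert the energy is ``essentially undissipated.'' The paper argues in the forward direction: assuming $S(t)$ exponentially stable and using the representation formula \eqref{REP}, one bounds $\|\Sigma(t)\eta_0\|_\M$ by $\|\eta^t\|_\M$ plus the correction $\|\psi_x(t)\|$ (both exponentially small), concluding that the right-translation semigroup $\Sigma(t)$ on $\M$ is itself exponentially stable; Theorem~\ref{THM33} then delivers \eqref{NEC}. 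This reduction is short precisely because the hard combinatorial analysis of the kernel is outsourced to the known characterization of exponential stability for $\Sigma(t)$, which your sketch would in effect have to reprove inside the coupled system.
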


The decay properties of $S(t)$ have been previously studied in the papers \cite{MS,MRFS}.
We will discuss and compare those results in the next Sections \ref{Nec} and \ref{Suf},
where we will also provide the proofs of the two directions of Theorem~\ref{MAIN}.
Condition~\eqref{NEC} appears for the first time in connection with systems with memory
in~\cite{ChePat}, whereas \eqref{rivera}, which basically says that the two hyperbolic equations share the same
propagation speed, is used in~\cite{AMM} for
the same model but with a convolution integral of Volterra type. In that work, the
failure of~\eqref{rivera} is shown to prevent the possibility of any uniform decay of the solutions.
Actually, the same phenomenon pops up if the convolution integral, which contains the whole dissipation of
the system, is replaced by an instantaneous damping term, such as $\psi_t$
(see \cite{SOU}).

\begin{remark}
The existence of the semigroup $S(t)$ can be actually established under weaker
conditions on $\mu$, which can be only piecewise absolutely continuous
with (infinitely many) discontinuity points.
For the proof of the necessity part of Theorem~\ref{MAIN} nothing more is needed.
In particular, no use is made of~\eqref{rivera}.
Concerning sufficiency, besides~\eqref{rivera}-\eqref{NEC},
one has to require that the set where $\mu'<0$ has positive measure,
automatically satisfied if $\mu$ is absolutely continuous (as in our hypotheses).
\end{remark}

\section{Functional Setting and Notation}
\label{FSN}

\noindent
In what follows, $\langle\cdot,\cdot\rangle$ and $\|\cdot\|$ are the
inner product and norm on the (real) Hilbert space $L^2({\mathfrak I})$.
We will also consider the Sobolev space
$H_0^1({\mathfrak I})$ endowed with the gradient norm, due to the Poincar\'e inequality,
along with the $L^2$-weighted space of $H_0^1$-valued functions on $\R^+=(0,\infty)$
$$
\M = L^2(\R^+; H_0^1({\mathfrak I})),
\qquad
\langle\eta,\xi\rangle_\M=\int_0^\infty \mu(s)\langle\eta_x(s),\xi_x(s)\rangle\,\d s.
$$
We define the linear operator $T$ on $\M$ by
$$T\eta=-D\eta,\qquad \D(T)=\big\{\eta\in{\M}:D\eta\in\M,\,\,
\eta(0)=0\big\},$$
where $D$ stands for weak derivative.
The operator $T$ is the infinitesimal generator
of the right-translation semigroup $\Sigma(t)$
on $\M$, acting as
$$
[\Sigma(t)\eta](s)=
\begin{cases}
0 & s\leq t,\\
\eta(s-t) & s>t.
\end{cases}
$$
The phase space of our problem will be\footnote{In the case of Neumann
boundary conditions, one has to work in spaces of zero-mean functions.}
$$
\H = H_0^1({\mathfrak I})\times L^2({\mathfrak I})\times
H_0^1({\mathfrak I})\times L^2({\mathfrak I})\times \M
$$
normed by
$$
\|(\varphi,\tilde\varphi,\psi,\tilde\psi,\eta)\|_\H^2
= \kappa\|\varphi_x+\psi\|^2 +\rho_1\|\tilde\varphi\|^2
+a\|\psi_x\|^2+\rho_2\|\tilde\psi\|^2 + \|\eta\|^2_\M.
$$

\smallskip
\noindent
Finally, we recall \cite[Theorem 3.3]{ChePat}.

\begin{theorem}
\label{THM33}
The right-translation semigroup $\Sigma(t)$ acting on $\M$ is exponentially stable if and only if
\eqref{NEC} holds.
\end{theorem}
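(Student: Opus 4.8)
The plan is to make the norm of $\Sigma(t)\eta$ fully explicit and thereby reduce the whole equivalence to a pointwise comparison of $\mu(s+t)$ with $\mu(s)$. Since $[\Sigma(t)\eta](s)$ vanishes for $s\leq t$, the change of variable $s\mapsto s+t$ gives, for every $\eta\in\M$ and every $t\geq 0$,
$$
\|\Sigma(t)\eta\|_\M^2=\int_t^\infty \mu(s)\|\eta_x(s-t)\|^2\,\d s=\int_0^\infty \mu(s+t)\|\eta_x(s)\|^2\,\d s.
$$
Everything is then governed by the ratio $\mu(s+t)/\mu(s)$; indeed, comparing this with $\|\eta\|_\M^2=\int_0^\infty\mu(s)\|\eta_x(s)\|^2\,\d s$ one sees that the squared operator norm of $\Sigma(t)$ equals the essential supremum over $s>0$ (with $\mu(s)>0$) of $\mu(s+t)/\mu(s)$. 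The two directions of the theorem are the two inequalities hidden in this identity.

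Sufficiency is immediate. Assuming \eqref{NEC} with $\sigma=t$, we have $\mu(s+t)\leq C\e^{-\delta t}\mu(s)$ for every $s$, and substituting into the formula above yields
$$
\|\Sigma(t)\eta\|_\M^2\leq C\e^{-\delta t}\int_0^\infty \mu(s)\|\eta_x(s)\|^2\,\d s=C\e^{-\delta t}\|\eta\|_\M^2,
$$
which is exponential stability with $K=\sqrt{C}$ and $\omega=\delta/2$.

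For necessity — the more delicate direction — I would exploit the freedom in choosing $\eta$ to localize in the $s$-variable. Fixing a nonzero $u\in H_0^1(\I)$ and taking $\eta(s)=f(s)\,u$ for a scalar weight $f$ with $\int_0^\infty\mu(s)f(s)^2\,\d s<\infty$, the stability estimate $\|\Sigma(t)\eta\|_\M^2\leq K^2\e^{-2\omega t}\|\eta\|_\M^2$ collapses to
$$
\int_0^\infty \mu(s+t)f(s)^2\,\d s\leq K^2\e^{-2\omega t}\int_0^\infty \mu(s)f(s)^2\,\d s.
$$
Choosing $f$ to be the indicator of a shrinking interval $[s_0,s_0+h]$, dividing both sides by $\int_{s_0}^{s_0+h}\mu$, and letting $h\to 0$, the Lebesgue differentiation theorem delivers $\mu(s_0+t)\leq K^2\e^{-2\omega t}\mu(s_0)$ for almost every $s_0>0$; as $K$ and $\omega$ do not depend on $t$, this is precisely \eqref{NEC} with $C=K^2$ and $\delta=2\omega$.

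The main obstacle is upgrading this almost-everywhere bound (for each fixed $t$) to the pointwise statement \eqref{NEC} required for \emph{every} $s>0$ and $\sigma\geq 0$. Here I would invoke the monotonicity of $\mu$: passing to its right-continuous representative (harmless, since values on a null set do not affect the weighted norm of $\M$), one approximates an arbitrary $s_0$ from above by good points $s_n\downarrow s_0$ and takes the limit using $\mu(s_n)\to\mu(s_0)$ and $\mu(s_n+t)\to\mu(s_0+t)$. Under the paper's standing hypotheses $\mu$ is absolutely continuous, hence continuous, so this step is automatic and no representative need be chosen. The only remaining care concerns the region where $\mu$ vanishes, dealt with by restricting the essential supremum to $\{\mu>0\}$ and noting that a nonincreasing $\mu$ is identically zero beyond its support, where \eqref{NEC} holds trivially.
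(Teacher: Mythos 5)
The paper does not actually prove Theorem \ref{THM33}: it is quoted without proof from \cite[Theorem 3.3]{ChePat}, so there is no internal argument to compare against. Your proof is correct and self-contained, and it follows the natural (and, as far as the cited reference goes, the standard) route: the identity $\|\Sigma(t)\eta\|_\M^2=\int_0^\infty\mu(s+t)\|\eta_x(s)\|^2\,\d s$ reduces the whole equivalence to the pointwise comparison of $\mu(s+t)$ with $\mu(s)$, sufficiency is a one-line substitution of \eqref{NEC}, and necessity follows by testing with tensor products $f(s)u$ localized in the $s$-variable. One simplification is worth noting in the necessity step: since $\mu$ is nonincreasing, the inequality $\int_{s_0}^{s_0+h}\mu(s+t)\,\d s\leq K^2\e^{-2\omega t}\int_{s_0}^{s_0+h}\mu(s)\,\d s$ already yields $h\,\mu(s_0+t+h)\leq K^2\e^{-2\omega t}\,h\,\mu(s_0)$ by bounding the left integrand from below by its value at the right endpoint and the right integrand from above by its value at the left endpoint; letting $h\to0$ and using the continuity of $\mu$ (automatic here from absolute continuity, or right-continuity of a suitable representative in the more general setting) gives \eqref{NEC} at \emph{every} $s_0>0$ directly, with no appeal to the Lebesgue differentiation theorem and no almost-everywhere statement to upgrade afterwards. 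Your treatment of the region where $\mu$ vanishes is also correct, since a nonincreasing $\mu$ is identically zero beyond any zero, making \eqref{NEC} trivial there.
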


\section{The Contraction Semigroup}
\label{Semigroup}

\noindent
We formally define the auxiliary variable
$\eta=\eta^t(s)$ as (the dependence on $x$ is omitted)
$$
\eta^t(s)=\psi(t)-\psi(t-s).
$$
Then, \eqref{TIMO0} turns into the
system in the unknowns $\varphi=\varphi(t)$, $\psi=\psi(t)$ and $\eta=\eta^t$
\begin{equation}
\label{sistema}
\begin{cases}
\rho_1 \varphi_{tt} -\kappa(\varphi_x +\psi)_x = 0,\\
\noalign{\vskip1mm}
\displaystyle
\rho_2 \psi_{tt} -a\psi_{xx}-\int_0^\infty \mu(s)\eta_{xx}(s)\,\d s+\kappa(\varphi_x +\psi)= 0,\\
\noalign{\vskip1mm}
\eta_t = T\eta + \psi_t.
\end{cases}
\end{equation}
Introducing the state vector
$$Z(t)=(\varphi(t),\tilde\varphi(t),\psi(t),\tilde\psi(t),\eta^t),$$
system \eqref{sistema} can be clearly written as
a linear ODE in $\H$ of the form
\begin{equation}
\label{EQLIN}
\ddt Z(t)=\A Z(t),
\end{equation}
where the domain $\D(\A)$ of the linear operator $\A$, whose action
can be easily deduced from~\eqref{sistema},
is made by all the vectors $(\varphi,\tilde\varphi,\psi,\tilde\psi,\eta)\in\H$ such that
$$
\varphi,\psi\in H^2({\mathfrak I}),\quad
\tilde\varphi,\tilde\psi\in H_0^1({\mathfrak I}),\quad
\eta \in \D(T),\quad
\int_0^\infty \mu(s)\eta(s)\, \d s \in H^2({\mathfrak I}).
$$
According to \cite{MRFS},
the operator $\A$ is the infinitesimal generator of
a contraction semigroup
$$
S(t) = \e^{t\A}:\H\to\H.
$$
In particular, $\A$ is dissipative. Indeed, for every $z=(\varphi,\tilde\varphi,\psi,\tilde\psi,\eta)\in\D(A)$,
\begin{equation}
\label{TTT}
\l \A z,z\r_{\H}=\l T \eta,\eta\r_{\M}
=\frac12\int_0^{\infty}
\mu'(s)\|\eta_x(s)\|^2\,\d s\leq 0.
\end{equation}
Thus, for every initial datum
$z=(\varphi_0,\tilde\varphi_0,\psi_0,\tilde\psi_0,\eta_0)\in\H$
given at time $t=0$,
the unique solution at time $t>0$ to \eqref{EQLIN} reads
$$Z(t)=(\varphi(t),\varphi_t(t),\psi(t),\psi_t(t),\eta^t)=S(t)z,$$
where $\eta^t$ fulfills the explicit representation
formula (see~\cite{Terreni})
\begin{equation}
\label{REP}
\eta^t(s)=
\begin{cases}
\psi(t)-\psi(t-s) &  s\leq t,\\
\eta_0(s-t)+\psi(t)-\psi_0 & s>t.
\end{cases}
\end{equation}

\section{Theorem \ref{MAIN} (Necessity)}
\label{Nec}

\noindent
The proof of the necessity part of Theorem \ref{MAIN} is essentially the same
of~\cite[Theorem~3.2]{ChePat}, dealing with a linearly viscoelastic
equation. For the reader's convenience, we report here the short argument.

\subsection*{Proof of Theorem \ref{MAIN} (Necessity)}
Suppose $S(t)$ exponentially stable on $\H$. Then, for any initial datum $z\in \H$
of the form $z=(0,0,0,0,\eta_0)$ we have
$$\max\{\|\psi_x(t)\|,\|\eta^t\|_\M\}
\textstyle \leq \max\{1/a,1\}\|S(t)z\|_{\H}\leq K \e^{-\omega t}\|\eta_0\|_\M$$
for some positive $K,\omega$.
On the other hand, exploiting the representation formula \eqref{REP},
\begin{align*}
2\|\eta^t\|_\M^2
&\geq2\int_t^\infty\mu(s)\|\eta_{0x}(s-t)+\psi_x(t)\|^2\,\d s\\
&\geq\int_t^\infty \mu(s)\|\eta_{0x}(s-t)\|^2\,\d s
-2m\|\psi_x(t)\|^2\\
\noalign{\vskip2mm}
&\geq \|\Sigma(t)\eta_0\|^2_\M-2m K^2 \e^{-2\omega t}\|\eta_0\|_\M^2.
\end{align*}
We conclude that
$$\|\Sigma(t)\eta_0\|_{\M}\leq
K\sqrt{2(1+m)}\,\e^{-\omega t}\|\eta_0\|_{\M},
$$
and the claim is a consequence of Theorem \ref{THM33}.
\qed

\medskip

Our Theorem \ref{MAIN} (Necessity), seems to contradict
the following result established by Messaoudi and Said-Houari:

\begin{theorem}[Theorem 2.1 in \cite{MS}]
\label{sterco}
Let $\mu$ satisfy the differential inequality
\begin{equation}
\label{sterco2}
\mu'(s)+\delta [\mu(s)]^p\leq 0
\end{equation}
for some $\delta>0$ and some $p\in(1,\frac32)$.
Then, for every initial datum $z\in\H$, the inequality
$$E(t):={\tx\frac12}\|S(t)z\|_\H^2\leq \frac{M}{(1+t)^{1/(p-1)}}
$$
holds for some $M>0$ depending on $z$.
\end{theorem}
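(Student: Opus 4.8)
The plan is to argue by the classical energy–multiplier method, constructing a Lyapunov functional equivalent to $E$. By density and the contraction property of $S(t)$, it suffices to treat data $z\in\D(\A)$ and then pass to the limit. Writing $S(t)z=(\varphi,\varphi_t,\psi,\psi_t,\eta^t)$, the dissipation identity \eqref{TTT} gives
\[
\ddt E(t)=\tfrac12\int_0^\infty\mu'(s)\|\eta^t_x(s)\|^2\,\d s=-\tfrac12\,\mathcal D(t),
\qquad
\mathcal D(t):=\int_0^\infty(-\mu'(s))\|\eta^t_x(s)\|^2\,\d s\ge0.
\]
Thus $E$ is nonincreasing, but the dissipation $\mathcal D$ only sees the portion of the history weighted by $-\mu'$; the whole difficulty is to recover a full copy of $E$ (in particular the memory norm $\|\eta^t\|_\M^2$) out of $\mathcal D$ together with a handful of multiplier identities.

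First I would introduce the standard auxiliary functionals associated with \eqref{sistema}: testing the first equation against $\varphi$ (and $\varphi_t$) to produce a negative multiple of $\|\varphi_x+\psi\|^2$ and of $\|\varphi_t\|^2$; testing the second against $\psi$ to produce $-\|\psi_x\|^2$; and a memory multiplier pairing $\rho_2\psi_t$ with $\int_0^\infty\mu(s)\eta^t(s)\,\d s$ to produce $-\|\psi_t\|^2$. The delicate mixed terms between the two wave equations are removed by a coupling functional, and it is precisely here that the equal–speed relation \eqref{rivera} becomes indispensable. Every remainder carrying $\int_0^\infty\mu(s)\eta^t(s)\,\d s$ is controlled by Cauchy–Schwarz through $\big(\int_0^\infty\mu(s)\|\eta^t_x(s)\|\,\d s\big)^2\le\big(\int_0^\infty\tfrac{\mu^2}{-\mu'}\,\d s\big)\mathcal D(t)$, the weight being finite because \eqref{sterco2} forces $\tfrac{\mu^2}{-\mu'}\le\tfrac1\delta\mu^{2-p}$ with $\mu^{2-p}$ integrable exactly when $p<\tfrac32$. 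Summing these functionals with tuned coefficients and adding a large multiple $NE$, I expect to reach $\mathcal L\simeq E$ with $\ddt\mathcal L\le-c_0 E+c_1\|\eta^t\|_\M^2+c_2\mathcal D$; since $\mathcal D=-2E'$, enlarging $N$ absorbs the last term and leaves $\ddt\mathcal L\le-c_0 E+c_1\|\eta^t\|_\M^2$.

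The main obstacle is the leftover memory term $\|\eta^t\|_\M^2=\int_0^\infty\mu\|\eta^t_x\|^2\,\d s$, which for $p>1$ cannot be bounded by $\mathcal D$ alone, consistently with the fact that \eqref{sterco2} does not entail \eqref{NEC}. Here \eqref{sterco2} enters pointwise as $\mu^p\le-\mu'/\delta$, and Hölder's inequality with exponents $p$ and $p/(p-1)$ yields
\[
\int_0^\infty \mu(s)\|\eta^t_x(s)\|^2\,\d s
\le\Big(\tfrac1\delta\,\mathcal D(t)\Big)^{1/p}
\Big(\int_0^\infty \|\eta^t_x(s)\|^2\,\d s\Big)^{(p-1)/p}.
\]
The fragile point is that the second factor must be shown to stay bounded by a constant depending on $z$, using the uniform bound $E(t)\le E(0)$ and the representation \eqref{REP}. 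Granting this, one has $\|\eta^t\|_\M^2\le C(-E')^{1/p}$; inserting it into the inequality for $\mathcal L$, weighting by $E^{p-1}$, and applying Young's inequality with the same exponents converts the estimate into $\frac{\d}{\d t}\big(E^{p-1}\mathcal L+c\,E\big)\le-c'E^{p}$. Since $E^{p-1}\mathcal L+c\,E\simeq E$, this reads $E'\le-c''E^{p}$ up to equivalence, and integrating this nonlinear differential inequality gives $E(t)\le M(1+t)^{-1/(p-1)}$, as claimed.
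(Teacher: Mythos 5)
The first thing to understand is that the paper does not prove Theorem \ref{sterco}: it quotes it from \cite{MS} precisely in order to refute it, so there is no ``paper's proof'' to compare yours against, and indeed no correct proof can exist. The paper's observation is that the stated conclusion, holding for \emph{every} $z\in\H$ with a constant $M$ depending on $z$, gives $\|(1+t)^{1/(2p-2)}S(t)z\|_\H\leq Q_z$ for each fixed $z$; the Uniform Boundedness Principle then upgrades this to a bound on the operator norm of $(1+t)^{1/(2p-2)}S(t)$, which drives the operator norm of $S(t)$ below one for large $t$ and hence forces exponential stability. But the kernel $\mu(s)=(1+s)^{-1/(p-1)}$ satisfies \eqref{sterco2} with $\delta=\frac{1}{p-1}$ (and lies in the admissible class, since $\frac{1}{p-1}>2$ makes it integrable) while violating \eqref{NEC}, so the necessity part of Theorem \ref{MAIN}, together with Theorem \ref{THM33}, rules out exponential stability for it. The statement is therefore false as written, and the paper's point in displaying it is exactly this inconsistency.

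Consistently with this, your sketch collapses at the step you yourself flag as fragile: the boundedness of $\int_0^\infty\|\eta^t_x(s)\|^2\,\d s$. For a generic $z\in\H$ the history component lies only in $\M=L^2_\mu(\R^+;H_0^1({\mathfrak I}))$; since $\mu$ is integrable and tends to zero at infinity, $\|\eta_{0x}(s)\|$ may grow as $s\to\infty$ while $\|\eta_0\|_\M$ stays finite, so your unweighted integral is already $+\infty$ at $t=0$. Even for well-prepared data (say $\eta_0$ compactly supported), the representation \eqref{REP} contributes $\int_0^t\|\psi_x(t)-\psi_x(t-s)\|^2\,\d s\leq 2t\sup_\tau\|\psi_x(\tau)\|^2+2\int_0^t\|\psi_x(\tau)\|^2\,\d\tau$, which the uniform energy bound $E(t)\leq E(0)$ only controls by a quantity growing linearly in $t$; to do better you would need the very decay you are trying to prove. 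This is not a repairable technicality: any argument closing this loop for all $z\in\H$ would establish a statement that contradicts Theorem \ref{MAIN}. What multiplier schemes of the type you set up can honestly deliver is a decay rate for a restricted class of initial data carrying extra (unweighted) integrability of the history, and such a rate is necessarily non-uniform in $z$, which is precisely why it does not conflict with the necessity result.
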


Let us observe that
the correct conclusion of Theorem~\ref{sterco} should have been that the energy decays
exponentially, for lack of exponential stability prevents the existence of uniform decay
patterns. Indeed, the thesis of Theorem~\ref{sterco} implies that, for every $z\in\H$,
$$\|(1+t)^{1/(2p-2)}S(t)z\|_\H\leq Q_z
$$
for some $Q_z>0$, and a direct application of the
Uniform Boundedness Principle yields
$$\|S(t)z\|_\H\leq \frac{Q}{(1+t)^{1/(2p-2)}}\|z\|_\H,
$$
where $Q>0$ is now independent of $z$.
Hence the operator norm of $S(t)$ goes below one
for large values of $t$, and exponential stability
readily follows.
At the same time, $S(t)$ cannot have a uniform
decay
if, for instance,
$$\mu(s)=\frac{1}{(1+s)^{1/(p-1)}},$$
which complies with \eqref{sterco2} but clearly violates~\eqref{NEC}.

\section{Theorem \ref{MAIN} (Sufficiency)}
\label{Suf}

\noindent
The exponential stability of $S(t)$ has been proved in \cite{MRFS} within
the hypotheses
\begin{equation}
\label{riv1}
\mu'(s)+k_1\mu(s)\geq 0,\qquad |\mu''(s)|\leq k_2\mu(s),
\end{equation}
for some $k_1,k_2>0$, and
\begin{equation}
\label{riv2}
\mu'(s)+\delta\mu(s)\leq 0,
\end{equation}
for some $\delta>0$.
Let aside \eqref{riv1}, which is only technical, condition~\eqref{riv2}
is equivalent to~\eqref{NEC} with $C=1$.
Nonetheless, \eqref{NEC} with $C>1$
turns out to be much more general than \eqref{riv2}.
For instance, any compactly supported $\mu$ (in the class of kernels considered in the present paper)
satisfies \eqref{NEC}, but cannot comply with \eqref{riv2} if it has flat zones, or even
horizontal inflection points. Besides, \eqref{NEC} with $C>1$ makes no assumptions at all
on the derivative $\mu'$.

\smallskip
Analogously to \cite{MRFS},
the proof of the sufficiency part of Theorem \ref{MAIN}
is based on the following abstract result from~\cite{Pru}
(see also~\cite{GNP} for the precise statement used here).

\begin{lemma}
\label{pruss}
The contraction semigroup $S(t)$ on $\H$ is exponentially stable
if and only if there exists $\eps>0$ such that
$$
\inf_{\lambda\in \R}\|\ii\lambda z-\A z\|_{\H}\geq \eps\|z\|_{\H},\quad \forall z \in \D(\A),
$$
where $\A$ and $\H$ are understood to be the complexifications
of the original $\A$ and $\H$.
\end{lemma}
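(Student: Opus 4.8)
This is the Gearhart--Pr\"uss--Huang characterization of exponential stability in Hilbert space, and I would establish the two implications separately. Necessity is the routine half: if $\|S(t)\|\leq K\e^{-\omega t}$, then for each real $\lambda$ the Bochner integral $\int_0^\infty\e^{-\ii\lambda t}S(t)\,\d t$ converges in operator norm, represents the resolvent $R(\ii\lambda):=(\ii\lambda-\A)^{-1}$, and satisfies $\|R(\ii\lambda)\|\leq K/\omega$ uniformly in $\lambda$. Reading this as a lower bound on $\ii\lambda-\A$ gives $\|\ii\lambda z-\A z\|_\H\geq(\omega/K)\|z\|_\H$ for all $z\in\D(\A)$ and all $\lambda$, so the infimum over $\lambda$ is bounded below by $\eps=\omega/K$.

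The content lies in the converse. Observe first that the hypothesis is merely a compact way of writing $\|(\ii\lambda-\A)z\|_\H\geq\eps\|z\|_\H$ for every real $\lambda$ and every $z\in\D(\A)$. Since $\A$ generates a contraction semigroup its spectrum is contained in $\{\Re\lambda\leq0\}$, so the open right half-plane lies in the resolvent set; as every boundary point of the spectrum belongs to the approximate point spectrum, the uniform lower bound forbids spectrum on the imaginary axis, giving $\ii\R\subset\rho(\A)$ with $\sup_\lambda\|R(\ii\lambda)\|\leq1/\eps$. A Neumann-series perturbation based on the identity $\beta+\ii\lambda-\A=(\ii\lambda-\A)(I+\beta R(\ii\lambda))$ then extends this to a uniform bound $\sup\|R(\beta+\ii\lambda)\|\leq M$ on a strip $0\leq\beta\leq\alpha$ for some $\alpha>0$.

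With the strip bound in hand, I would pass from resolvent estimates to integrability of orbits through the Plancherel theorem for $\H$-valued functions. For $\beta>0$ the function $t\mapsto\e^{-\beta t}S(t)z$, extended by zero to $t<0$, has Fourier transform $\lambda\mapsto R(\beta+\ii\lambda)z$, so Plancherel identifies $\int_0^\infty\e^{-2\beta t}\|S(t)z\|_\H^2\,\d t$ with a constant multiple of $\int_\R\|R(\beta+\ii\lambda)z\|_\H^2\,\d\lambda$. Bounding the right-hand side uniformly in $\beta\in(0,\alpha]$ by $C\|z\|_\H^2$ is the heart of the matter and the step I expect to be the main obstacle: the integrand is only known to be pointwise bounded, not integrable in $\lambda$, and the classical remedy is Pr\"uss's duality trick, which runs the same Plancherel identity for the adjoint semigroup generated by $\A^*$---whose resolvent inherits the bound via $R(\lambda)^*=(\bar\lambda-\A^*)^{-1}$---and pairs the two square-integrals against a test vector to recover the missing decay. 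Once this is done, letting $\beta\downarrow0$ by monotone convergence yields $\int_0^\infty\|S(t)z\|_\H^2\,\d t\leq C\|z\|_\H^2$ for all $z\in\H$, and the Datko--Pazy theorem---square-integrability of every orbit in a Hilbert space forces exponential decay---delivers the exponential stability of $S(t)$.
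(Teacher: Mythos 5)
First, a point of comparison: the paper does not prove this lemma at all --- it is quoted from Pr\"uss \cite{Pru} (see also \cite{GNP}) --- so there is no internal proof to measure your attempt against. Judged on its own terms, your sketch follows the standard architecture of the Gearhart--Pr\"uss theorem and is sound in outline: the necessity half via the Laplace-transform representation of $R(\ii\lambda)$ is correct; the deduction $\ii\R\subset\rho(\A)$ from the lower bound, using that the semigroup is a contraction and that boundary points of the spectrum are approximate eigenvalues, is exactly right; so are the Neumann-series extension to a strip, the Plancherel identity for $t\mapsto\e^{-\beta t}S(t)z$, and the final appeal to Datko--Pazy.

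The one step you yourself flag as ``the heart of the matter'' is also the one place where your write-up is too vague to check. The duality trick you invoke is used in Pr\"uss's original argument to bound the scalar quantity $\l tS(t)z,w\r$ directly, not to produce the vector-valued estimate $\int_\R\|R(\beta+\ii\lambda)z\|_\H^2\,\d\lambda\leq C\|z\|_\H^2$ that your Plancherel-plus-Datko route requires; as described (``pairs the two square-integrals against a test vector'') it does not obviously deliver that inequality. The clean way to close this gap within your own scheme is the resolvent identity: for $0<\beta\leq\alpha<1$,
$$
R(\beta+\ii\lambda)z=\big(I+(1-\beta)R(\beta+\ii\lambda)\big)R(1+\ii\lambda)z,
$$
so the uniform strip bound $\|R(\beta+\ii\lambda)\|\leq M$ gives $\|R(\beta+\ii\lambda)z\|_\H\leq(1+M)\|R(1+\ii\lambda)z\|_\H$, while Plancherel at $\beta=1$ (where square-integrability is free, since $\|\e^{-t}S(t)z\|_\H\leq\e^{-t}\|z\|_\H$) yields $\int_\R\|R(1+\ii\lambda)z\|^2_\H\,\d\lambda\leq\pi\|z\|_\H^2$. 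This produces the uniform-in-$\beta$ bound with no duality at all, and the rest of your argument (monotone convergence as $\beta\downarrow0$, then Datko) goes through verbatim.
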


\subsection*{Proof of Theorem \ref{MAIN} (Sufficiency)}
Within hypothesis~\eqref{NEC}, suppose $S(t)$ be not exponentially stable.
Then, Lemma~\ref{pruss} ensures the existence of
sequences $\lambda_n\in\R$ and
$z_n=(\varphi_n,\tilde\varphi_n,\psi_n,\tilde\psi_n,\eta_n)\in \D(\A)$ with
$$
\|z_n\|_\H^2=\kappa\|\varphi_{nx}+\psi_n\|^2 +\rho_1\|\tilde\varphi_n\|^2
+a\|\psi_{nx}\|^2+\rho_2\|\tilde\psi_n\|^2 + \|\eta_n\|^2_\M=1
$$
satisfying the relation
\begin{equation}
\label{ZERO}
\ii\lambda_n z_n-\A z_n\to 0\quad\text{in } \H.
\end{equation}
Componentwise, \eqref{ZERO} reads
\begin{align}
\label{quattro}
&\ii\lambda_n \varphi_n- \tilde\varphi_n\to 0\quad\text{in } H_0^1,\\
\noalign{\vskip1.5mm}
\label{cinque}
&\ii\lambda_n \rho_1\tilde\varphi_n- \kappa(\varphi_{nx}+\psi_n)_x\to 0\quad\text{in } L^2,\\
\noalign{\vskip1.5mm}
\label{UNO}
&\ii\lambda_n \psi_n- \tilde\psi_n\to 0\quad\text{in } H_0^1,\\
\label{DUE}
&\ii\lambda_n\rho_2 \tilde\psi_n -a\psi_{nxx}
-\int_0^\infty\mu(s)\eta_{nxx}(s)\,\d s+\kappa(\varphi_{nx}+\psi_n)\to 0\quad\text{in } L^2,\\
\label{TRE}
&\ii\lambda_n {\eta}_n-T \eta_n-\tilde\psi_n\to 0\quad\text{in } \M.
\end{align}

\smallskip
\noindent
We assume $\lambda_n\not\to 0$ (the case $\lambda_n\to 0$ is much simpler and left to the reader).
Accordingly,
up to a subsequence ({\uts} in the sequel),
$$
\lambda_n\to \lambda_\star\in[-\infty,\infty]\setminus\{0\}.
$$
We will reach a contradiction by showing that every single component of $z_n$ goes to zero
in its norm \uts.\footnote{It is understood that passing to a subsequence means to refine the former one.}
The first part of the proof borrows some ideas from~\cite{Flat}.
Since $\mu$ is an absolutely continuous function vanishing at infinity,  the set
$$\II=\big\{s\in\R^+:\, K\mu'(s)+\mu(s)<0\big\}
$$
has positive measure for some $K>0$ large enough.
Let us define the space
$${\mathcal S}=L^2_\mu(\II;H_0^1({\mathfrak I})).
$$
We need three preliminary lemmas. We will lean
several times (without explicit mention) on the boundedness in $\H$ of $z_n$.

\begin{lemma}
\label{lemmaprim}
We have the convergence
$\|\eta_n\|_{\mathcal S}\to 0$.
\end{lemma}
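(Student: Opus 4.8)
The plan is to extract decay from the dissipation identity and then localize it to the set $\II$. From the dissipativity estimate~\eqref{TTT}, taking the real part of $\l \ii\lambda_n z_n - \A z_n, z_n\r_\H$ and using~\eqref{ZERO} together with $\|z_n\|_\H=1$, we obtain
\begin{equation}
\label{dissip-bound}
-\int_0^\infty \mu'(s)\|\eta_{nx}(s)\|^2\,\d s = -2\,\Re\l \A z_n, z_n\r_\H \to 0.
\end{equation}
This is the central quantitative input: it says the weighted $L^2$-norm of $\eta_{nx}$ against the measure $-\mu'(s)\,\d s$ vanishes. First I would record~\eqref{dissip-bound} carefully, noting that $-\mu' \geq 0$ so the integrand is nonnegative.

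Next I would exploit the defining inequality of $\II$, namely $K\mu'(s)+\mu(s)<0$, which rearranges to $\mu(s) < -K\mu'(s)$ on $\II$. Therefore
\begin{equation}
\label{localize}
\|\eta_n\|_{\mathcal S}^2 = \int_\II \mu(s)\|\eta_{nx}(s)\|^2\,\d s
\leq -K\int_\II \mu'(s)\|\eta_{nx}(s)\|^2\,\d s
\leq -K\int_0^\infty \mu'(s)\|\eta_{nx}(s)\|^2\,\d s,
\end{equation}
where the last step uses $-\mu' \geq 0$ to extend the integral from $\II$ to all of $\R^+$. Combining~\eqref{localize} with~\eqref{dissip-bound} forces $\|\eta_n\|_{\mathcal S}^2 \to 0$, which is exactly the claim. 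The argument is short precisely because the set $\II$ was engineered so that the weight $\mu$ is controlled by the dissipation density $-\mu'$.

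The main subtlety, rather than a genuine obstacle, is justifying~\eqref{dissip-bound} rigorously in the complexified setting: one must verify that $\Re\l \A z_n,z_n\r_\H = \tfrac12\int_0^\infty \mu'(s)\|\eta_{nx}(s)\|^2\,\d s$ remains valid after complexification (with $\|\eta_{nx}(s)\|^2$ now the squared modulus), and that the inner product $\l \ii\lambda_n z_n - \A z_n, z_n\r_\H$ has its real part equal to $-\Re\l \A z_n, z_n\r_\H$ since $\Re\l \ii\lambda_n z_n, z_n\r_\H = 0$ for real $\lambda_n$. Both follow directly from~\eqref{TTT} and the skew-adjointness of multiplication by $\ii\lambda_n$, and the convergence to zero is then immediate from~\eqref{ZERO} and the boundedness of $z_n$ via Cauchy--Schwarz. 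I expect no further difficulty: Lemma~\ref{lemmaprim} is the cleanest of the three preliminary lemmas, with the real work deferred to the later lemmas that must control the remaining components of $z_n$ off the set $\II$.
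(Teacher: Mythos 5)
Your proof is correct and follows exactly the paper's own argument: take the real part of $\l \ii\lambda_n z_n-\A z_n,z_n\r_\H$, use \eqref{TTT} to conclude $-\int_0^\infty\mu'(s)\|\eta_{nx}(s)\|^2\,\d s\to 0$, and then use the defining inequality $\mu(s)<-K\mu'(s)$ on $\II$ to bound $\|\eta_n\|_{\mathcal S}^2$. No gaps; the complexification point you flag is routine and handled implicitly in the paper.
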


\begin{proof}
By means of \eqref{ZERO},
$$\Re\l \ii\lambda_n z_n-\A z_n, z_n\r_\H
=-\Re\l \A z_n, z_n\r_\H\to 0,$$
and using~\eqref{TTT} we are led to
\begin{equation}
\label{MUprimo}
0\leq-\int_\II\mu'(s)\|\eta_{nx}(s)\|^2\,\d s\leq
-\int_0^\infty\mu'(s)\|\eta_{nx}(s)\|^2\,\d s\to 0.
\end{equation}
Since
$$
\|\eta_n\|_{\mathcal S}^2=\int_\II\mu(s)\|\eta_{nx}(s)\|^2\,\d s
\leq -K\int_\II\mu'(s)\|\eta_{nx}(s)\|^2\,\d s,
$$
the conclusion follows.
\end{proof}

\begin{lemma}
\label{DUDU}
The sequence
$|\lambda_n|\|\tilde\psi_n\|_{*}$ is bounded,
where $\|\cdot\|_{*}$  is the norm in $H^{-1}(\mathfrak{I})$.
\end{lemma}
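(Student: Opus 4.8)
The natural starting point is the fourth component \eqref{DUE} of the system, since it is the only relation carrying $\ii\lambda_n\tilde\psi_n$ on its left-hand side. The plan is to pair \eqref{DUE} in $L^2({\mathfrak I})$ with an arbitrary test function $w\in H_0^1({\mathfrak I})$ normalized by $\|w_x\|\leq1$, and to extract a bound on $|\lambda_n|\,|\l\tilde\psi_n,w\r|$ uniform both in $n$ and in $w$. Since $\|\tilde\psi_n\|_{*}=\sup\{|\l\tilde\psi_n,w\r|:\|w_x\|\leq1\}$ by definition of the $H^{-1}$-norm, such a bound is exactly the assertion of the lemma.

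Carrying this out, testing \eqref{DUE} against $w$ produces
$$\ii\lambda_n\rho_2\l\tilde\psi_n,w\r = a\l\psi_{nxx},w\r+\int_0^\infty\mu(s)\l\eta_{nxx}(s),w\r\,\d s-\kappa\l\varphi_{nx}+\psi_n,w\r+o(1),$$
where the $o(1)$ is controlled uniformly in $w$: the residual in \eqref{DUE} tends to $0$ in $L^2$ and, by the Poincar\'e inequality, $\|w\|\lesssim\|w_x\|\leq1$. Because $w\in H_0^1$ while $\psi_n$ and $\int_0^\infty\mu(s)\eta_n(s)\,\d s$ lie in $H^2({\mathfrak I})$ on $\D(\A)$, I would integrate by parts in $x$ with no boundary contribution, turning the two second-order terms into $-a\l\psi_{nx},w_x\r$ and $-\int_0^\infty\mu(s)\l\eta_{nx}(s),w_x\r\,\d s$. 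This is the decisive step: it converts the second-order spatial quantities into precisely the first-order ones kept bounded by $\|z_n\|_\H$.

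It then remains to estimate the right-hand side uniformly. One has $|\l\psi_{nx},w_x\r|\leq\|\psi_{nx}\|$ and $|\l\varphi_{nx}+\psi_n,w\r|\leq\|\varphi_{nx}+\psi_n\|\,\|w\|\lesssim\|\varphi_{nx}+\psi_n\|$, both bounded through $\|z_n\|_\H=1$. The memory integral is handled by reading it as the $\M$-pairing of $\eta_n$ with the constant history $\hat w(s)\equiv w$: indeed $\int_0^\infty\mu(s)\l\eta_{nx}(s),w_x\r\,\d s=\l\eta_n,\hat w\r_\M$, and Cauchy--Schwarz in $\M$ together with $\|\hat w\|_\M=\sqrt{m}\,\|w_x\|$ gives $|\l\eta_n,\hat w\r_\M|\leq\sqrt{m}\,\|\eta_n\|_\M$. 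Collecting the three estimates shows that $|\lambda_n|\,\rho_2\,|\l\tilde\psi_n,w\r|$ is bounded by a constant independent of $n$ and of $w$ with $\|w_x\|\leq1$; taking the supremum over such $w$ yields the claim.

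The only genuinely structural point is the treatment of the memory term, and with it the recognition that $H^{-1}$ is the correct target norm: the integration by parts deliberately lands on the first-order quantities $\|\psi_{nx}\|$ and $\|\eta_n\|_\M$ supplied by the phase space, whereas the analogous bound for $|\lambda_n|\|\tilde\psi_n\|$ in the $L^2$-norm is simply unavailable. Everything else reduces to Cauchy--Schwarz and the Poincar\'e inequality, so I anticipate no serious obstacle beyond keeping every estimate uniform in the test function $w$.
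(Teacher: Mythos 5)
Your proof is correct and follows essentially the same route as the paper: the paper applies the triangle inequality in $H^{-1}$, uses \eqref{DUE} together with the embedding $L^2\subset H^{-1}$ for the residual, and bounds the $H^{-1}$-norm of $a\psi_{nxx}+\int_0^\infty\mu(s)\eta_{nxx}(s)\,\d s-\kappa(\varphi_{nx}+\psi_n)$ by the first-order quantities controlled by $\|z_n\|_\H=1$, which is exactly what your duality argument with test functions $w\in H_0^1$, $\|w_x\|\leq 1$, accomplishes. The only cosmetic difference is that you make the integration by parts and the $\M$-pairing explicit, whereas the paper states the resulting bound directly.
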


\begin{proof}
By the triangle inequality,
\begin{align*}
\rho_2|\lambda_n|\|\tilde\psi_n\|_{*}
&\leq\Big\|\ii\lambda_n\rho_2 \tilde\psi_n -
a\psi_{nxx} -\int_0^\infty\mu(s)\eta_{nxx}(s)\,\d s+\kappa(\varphi_{nx}+\psi_n)\Big\|_{*}\\
&\quad +\Big\|a\psi_{nxx}+
\int_0^\infty\mu(s)\eta_{nxx}(s)\,\d s-\kappa(\varphi_{nx}+\psi_n)\Big\|_{*}.
\end{align*}
Due to \eqref{DUE} and the continuous embedding
$L^2(\mathfrak{I})\subset  H^{-1}(\mathfrak{I})$, the first term in the right-hand side
goes to zero, whereas the second one is dominated by
$$
a\|\psi_{nx}\|+\int_0^\infty\mu(s)\|\eta_{nx}(s)\|\,\d s+\kappa\|\varphi_n\| +\kappa\|\psi_n\|_{*},
$$
which is bounded uniformly with respect to $n\in\N$.
\end{proof}

\begin{lemma}
\label{lemmaTEO}
Within \eqref{NEC}, for any $\xi\in\M$ we have the estimate
$$\int_0^\infty \mu(s)\int_0^s
\|\xi_x(\sigma)\|\,\d \sigma\d s\leq
\tx \frac{\sqrt{4C m}}\delta\|\xi\|_\M.
$$
\end{lemma}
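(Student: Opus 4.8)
The plan is to read the double integral as a weighted $L^1$-pairing and to reduce the whole estimate to a one-dimensional Hardy-type inequality with weight $\mu$, the exponential control being supplied by~\eqref{NEC}. Throughout I abbreviate $g(\sigma)=\|\xi_x(\sigma)\|$, so that $\|\xi\|_\M^2=\int_0^\infty\mu(\sigma)g(\sigma)^2\,\d\sigma$, and I set $G(s)=\int_0^s g(\sigma)\,\d\sigma$; the quantity to be bounded is then $\int_0^\infty\mu(s)G(s)\,\d s$.

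First I would peel off a factor $\sqrt m$ by a Cauchy--Schwarz in $L^2(\mu(s)\,\d s)$, pairing $G$ against the constant $1$:
\begin{equation*}
\int_0^\infty\mu(s)G(s)\,\d s\leq\Big(\int_0^\infty\mu(s)\,\d s\Big)^{1/2}\Big(\int_0^\infty\mu(s)G(s)^2\,\d s\Big)^{1/2}=\sqrt m\,\Big(\int_0^\infty\mu(s)G(s)^2\,\d s\Big)^{1/2}.
\end{equation*}
This reduces the claim to the weighted Hardy-type bound
\begin{equation*}
\int_0^\infty\mu(s)G(s)^2\,\d s\leq\frac{4C}{\delta^2}\int_0^\infty\mu(\sigma)g(\sigma)^2\,\d\sigma,
\end{equation*}
after which the stated constant $\sqrt{4Cm}/\delta$ follows at once.

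To establish the Hardy bound I would insert a tuned exponential weight into $G$ and apply Cauchy--Schwarz in the inner variable: writing $g(\sigma)=\e^{-\delta(s-\sigma)/4}\cdot\e^{\delta(s-\sigma)/4}g(\sigma)$ gives
\begin{equation*}
G(s)^2\leq\Big(\int_0^s\e^{-\delta(s-\sigma)/2}\,\d\sigma\Big)\Big(\int_0^s\e^{\delta(s-\sigma)/2}g(\sigma)^2\,\d\sigma\Big)\leq\frac2\delta\int_0^s\e^{\delta(s-\sigma)/2}g(\sigma)^2\,\d\sigma.
\end{equation*}
Multiplying by $\mu(s)$, integrating in $s$, and exchanging the order of integration by Tonelli, the inner integral becomes $\int_\sigma^\infty\mu(s)\e^{\delta(s-\sigma)/2}\,\d s$. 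Here \eqref{NEC} enters, rewritten with base point $\sigma$: for $s>\sigma$ one has $\mu(s)=\mu((s-\sigma)+\sigma)\leq C\e^{-\delta(s-\sigma)}\mu(\sigma)$, so the growing factor $\e^{\delta(s-\sigma)/2}$ is beaten by half of the decay, leaving $\int_\sigma^\infty\mu(s)\e^{\delta(s-\sigma)/2}\,\d s\leq\frac{2C}{\delta}\mu(\sigma)$. Collecting the two factors $\frac2\delta$ then yields exactly the constant $4C/\delta^2$.

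The delicate point is the choice of the splitting exponent. A naive Cauchy--Schwarz with weight $\mu$ inside $G$ would force a division by $\mu$ that need not be integrable, so one must borrow decay from \eqref{NEC}; but borrowing the full rate $\e^{-\delta(s-\sigma)}$ leaves the $s$-tail only marginally convergent and wastes the gain, producing a cruder factor of order $C$ instead of $\sqrt C$. Splitting the rate evenly --- the exponent $\delta/2$ appearing in both resulting integrals --- is precisely the balance that makes the $\sigma$-integral and the $s$-tail each contribute a factor $2/\delta$, and hence gives the $\sqrt{4C}=2\sqrt C$ of the statement. I expect this optimization of the weight to be the only genuine subtlety; the remaining steps are routine applications of Cauchy--Schwarz, Tonelli, and \eqref{NEC}.
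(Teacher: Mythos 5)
Your proof is correct and yields exactly the stated constant; every step (the outer Cauchy--Schwarz giving $\sqrt m$, the inner Cauchy--Schwarz with the split weight $\e^{\pm\delta(s-\sigma)/4}$, Tonelli, and the application of \eqref{NEC} in the form $\mu(s)\leq C\e^{-\delta(s-\sigma)}\mu(\sigma)$ for $s>\sigma$) checks out. The underlying idea is the same as the paper's --- borrow exactly half of the exponential decay rate $\delta$ supplied by \eqref{NEC} so that each of the two resulting integrals of $\e^{-\delta u/2}$ contributes a factor $2/\delta$ --- but the organization differs. The paper splits $\mu(s)=\sqrt{\mu(s)}\sqrt{\mu(s)}$ at the outset, applies \eqref{NEC} to one square root to produce the convolution
$G(s)=\int_0^s\e^{-\delta(s-\sigma)/2}\sqrt{\mu(\sigma)}\,\|\xi_x(\sigma)\|\,\d\sigma$,
and then invokes Young's inequality $\|h*f\|_{L^2}\leq\|h\|_{L^1}\|f\|_{L^2}$ (the ``well-known result of measure theory'') to get $\|G\|_{L^2}\leq\frac2\delta\|\xi\|_\M$. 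You instead perform the outer Cauchy--Schwarz first, reduce to a weighted Hardy-type inequality, and prove it by a Schur-test computation (inner Cauchy--Schwarz plus Tonelli), applying \eqref{NEC} only at the very end to the full weight $\mu(s)$. Your route is slightly more self-contained, replacing the convolution inequality by an explicit interchange of integrals; the paper's is marginally shorter once Young's inequality is taken for granted. Both give the sharp-looking balance you identify, and your closing remark about why the even split of the rate is forced is accurate.
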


\begin{proof}
Exploiting \eqref{NEC} and the H\"older inequality,
$$\int_0^\infty \mu(s)\int_0^s
\|\xi_x(\sigma)\|\,\d \sigma\d s\leq
\sqrt{C}\int_0^\infty \sqrt{\mu(s)}\,G(s)\leq \sqrt{C m}\|G\|_{L^2(\R^+)},
$$
having set
$$G(s)=\int_0^s \e^{-\frac\delta2(s-\sigma)}\sqrt{\mu(\sigma)}\,\|\xi_x(\sigma)\|\,\d \sigma\d s.
$$
By a well-known result of measure theory,
$$\tx \|G\|_{L^2(\R^+)}\leq\frac2\delta\big\|\sqrt{\mu}\,\|\xi_x\|\big\|_{L^2(\R^+)}=\frac2\delta\|\xi\|_\M,
$$
which completes the argument.
\end{proof}

We are now ready to prove the norm-decay of every single component of $z_n$.

\begin{lemma}
\label{lemmav}
The convergence $\|\tilde\psi_n\|\to 0$ holds {\uts}.
\end{lemma}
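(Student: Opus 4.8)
The plan is to extract $\tilde\psi_n$ from the transport equation \eqref{TRE} and then confront it with Lemma~\ref{lemmaprim}. Set $h_n:=\ii\lambda_n\eta_n-T\eta_n-\tilde\psi_n$, so that $\|h_n\|_\M\to 0$ by \eqref{TRE}. Since $T\eta_n=-\partial_s\eta_n$ and $\eta_n(0)=0$ (because $\eta_n\in\D(T)$), this reads as the linear ODE $\partial_s\eta_n+\ii\lambda_n\eta_n=\tilde\psi_n+h_n$ in $\M$. As $\tilde\psi_n$ is independent of $s$, integrating with the factor $\e^{\ii\lambda_n s}$ and differentiating in $x$ gives
\[
\eta_{nx}(s)=f_n(s)\,\tilde\psi_{nx}+e_{nx}(s),\qquad
f_n(s)=\frac{1-\e^{-\ii\lambda_n s}}{\ii\lambda_n},\quad
e_{nx}(s)=\int_0^s\e^{-\ii\lambda_n(s-\sigma)}h_{nx}(\sigma)\,\d\sigma,
\]
where $|f_n(s)|=2|\sin(\lambda_n s/2)|/|\lambda_n|$ and $\|e_{nx}(s)\|\leq\int_0^s\|h_{nx}(\sigma)\|\,\d\sigma$.

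Next I would isolate the gradient of $\tilde\psi_n$. Rewriting the identity as $f_n(s)\tilde\psi_{nx}=\eta_{nx}(s)-e_{nx}(s)$, taking $L^2$-norms and integrating against $\mu$ over $\II$ yields
\[
\|\tilde\psi_{nx}\|\int_\II\mu(s)|f_n(s)|\,\d s
\leq\int_\II\mu(s)\|\eta_{nx}(s)\|\,\d s+\int_\II\mu(s)\|e_{nx}(s)\|\,\d s.
\]
By Cauchy--Schwarz the first term on the right is at most $\sqrt{m}\,\|\eta_n\|_{\mathcal S}\to 0$ by Lemma~\ref{lemmaprim}, while the second is dominated by $\int_0^\infty\mu(s)\int_0^s\|h_{nx}(\sigma)\|\,\d\sigma\,\d s\leq\frac{\sqrt{4Cm}}{\delta}\|h_n\|_\M\to 0$ thanks to Lemma~\ref{lemmaTEO}. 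Writing $I_n:=\int_\II\mu(s)|f_n(s)|\,\d s$, I thus obtain $\|\tilde\psi_{nx}\|\,I_n\to 0$.

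The decisive step, which I expect to be the main obstacle, is a lower bound of the form $\liminf_n|\lambda_n|I_n>0$; here the positivity of $\mu$ on the dissipative set $\II$ is essential. Since $|\lambda_n|I_n=2\int_\II\mu(s)|\sin(\lambda_n s/2)|\,\d s$, I would split according to the limit $\lambda_\star$. If $\lambda_\star$ is finite and nonzero, dominated convergence gives $|\lambda_n|I_n\to 2\int_\II\mu(s)|\sin(\lambda_\star s/2)|\,\d s$, which is strictly positive because $\II$ has positive measure and $\mu>0$ there. If instead $\lambda_\star=\pm\infty$, the weak-$*$ convergence $|\sin(\lambda_n s/2)|\rightharpoonup 2/\pi$ in $L^\infty$ (the mean of $|\sin|$) forces $|\lambda_n|I_n\to\frac4\pi\int_\II\mu>0$. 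In either case $I_n\geq c_0/|\lambda_n|$ for large $n$ with $c_0>0$, so the previous estimate upgrades to $\|\tilde\psi_{nx}\|/|\lambda_n|\to 0$.

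Finally I would bridge the gap between the gradient norm and the $L^2$ norm by interpolation. Using the duality inequality $\|\tilde\psi_n\|^2\leq\|\tilde\psi_n\|_{*}\|\tilde\psi_{nx}\|$ between $H^{-1}(\mathfrak I)$ and $H_0^1(\mathfrak I)$, I rewrite
\[
\|\tilde\psi_n\|^2\leq\big(|\lambda_n|\,\|\tilde\psi_n\|_{*}\big)\cdot\frac{\|\tilde\psi_{nx}\|}{|\lambda_n|}.
\]
The first factor is bounded by Lemma~\ref{DUDU}, the second tends to zero by the step above, and hence $\|\tilde\psi_n\|\to 0$, which is the assertion of the lemma.
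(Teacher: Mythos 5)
Your proof is correct, and it shares the paper's skeleton --- both arguments rest on the explicit representation formula for $\eta_n$ obtained by integrating the transport equation \eqref{TRE}, on Lemmas \ref{lemmaprim}, \ref{DUDU} and \ref{lemmaTEO}, and on keeping a $\mu$-weighted oscillatory integral over $\II$ away from zero --- but the mechanics genuinely differ at three points. The paper pairs $\eta_n$ with the auxiliary function $\Psi_n$ solving $-\Psi_{nxx}=\tilde\psi_n$, so that the representation formula produces $\alpha_n\|\tilde\psi_n\|^2+\beta_n$ with $\alpha_n=\int_\II\mu(s)(1-\e^{-\ii\lambda_n s})\,\d s$; keeping $\alpha_n$ complex and linear lets the authors invoke the plain Riemann--Lebesgue lemma to show $\Re\alpha_n$ has a positive limit. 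You instead take $L^2_x$-norms pointwise in $s$ before integrating, which puts the modulus $|1-\e^{-\ii\lambda_n s}|=2|\sin(\lambda_n s/2)|$ inside the integral and obliges you to use the (slightly stronger, but still standard) weak-$*$ convergence of $|\sin(\lambda_n\,\cdot)|$ to its mean $2/\pi$ when $\lambda_\star=\pm\infty$; the outcome is control of $\|\tilde\psi_{nx}\|/|\lambda_n|$ rather than of $\|\tilde\psi_n\|^2$. You therefore need one extra closing step, the interpolation $\|\tilde\psi_n\|^2\leq\|\tilde\psi_n\|_{*}\|\tilde\psi_{nx}\|$ combined with the bound $|\lambda_n|\|\tilde\psi_n\|_{*}\leq c$ of Lemma \ref{DUDU}, whereas the paper spends Lemma \ref{DUDU} on the upper-bound side of the pairing $\ii\lambda_n\l\eta_n,\Psi_n\r_{\mathcal S}\to 0$. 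Your variant buys the intermediate information $\|\tilde\psi_{nx}\|/|\lambda_n|\to 0$ (a statement about the gradient) at the price of a marginally heavier oscillation lemma; the paper's pairing is a bit slicker because linearity keeps everything quadratic in $\tilde\psi_n$ from the outset. Both routes are sound.
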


\begin{proof}
Introducing $\Psi_n$ such that $-\Psi_{nxx}=\tilde\psi_n$, we infer from
Lemmas~\ref{lemmaprim} and \ref{DUDU} that
$$|\ii\lambda_n\l\eta_n,\Psi_n\r_{\mathcal S}|\leq
|\lambda_n|\|\tilde\psi_n\|_{*}\int_\II\mu(s)\|\eta_{nx}(s)\|\,\d s
\leq \sqrt{m}\,|\lambda_n|\|\tilde\psi_n\|_{*}\|\eta_n\|_{\mathcal S}\to 0.
$$
On the other hand, calling
$$
\xi_n=\ii\lambda_n {\eta}_n-T \eta_n-\tilde\psi_n,
$$
we find the explicit expression
\begin{equation}
\label{save}
\eta_n(s)=\frac{1}{\ii\lambda_n}(1-\e^{-\ii\lambda_n s})\tilde\psi_n
+\int_0^s \e^{-\ii\lambda_n (s-\sigma)}\xi_n(\sigma)\,\d \sigma.
\end{equation}
Hence,
$$
\ii\lambda_n\l\eta_n,\Psi_n\r_{\mathcal S}=\alpha_n\|\tilde\psi_n\|^2
+\beta_n\to 0,
$$
where we put
\begin{align*}
\alpha_n&=\int_\II \mu(s)(1-\e^{-\ii\lambda_n s})\,\d s,\\
\beta_n&=\ii\lambda_n\int_\II \mu(s)\int_0^s \e^{-\ii\lambda_n (s-\sigma)}
\l\xi_{nx}(\sigma),\Psi_{nx}\r\,\d \sigma\d s.
\end{align*}
The conclusion follows by showing that $\beta_n\to 0$
whereas $\alpha_n$ remains away from zero for large $n$.
Indeed, by Lemmas \ref{DUDU}-\ref{lemmaTEO}
and the convergence~\eqref{TRE}
we get
$$
|\beta_n|\leq \tx \frac{\sqrt{4Cm}}{\delta}|\lambda_n|\|\tilde\psi_n\|_{*}\|\xi_n\|_\M\to 0.
$$
Concerning $\alpha_n$, we have two possibilities.
If $\lambda_\star\in\{-\infty,\infty\}$,
the Riemann-Lebesgue lemma
yields the convergence
$$\alpha_n\to
\int_\II \mu(s)\,\d s>0,$$
whereas, if $\lambda_\star\in\R\setminus\{0\}$,
$$\Re \alpha_n\to \int_\II \mu(s)(1-\cos \lambda_\star s)\,\d s>0.$$
In either case $\Re \alpha_n$ has positive limit (again, {\uts}).
\end{proof}

\begin{lemma}
\label{lemmaduo}
The convergence $\|\psi_{nx}\|\to 0$ holds {\uts}.
\end{lemma}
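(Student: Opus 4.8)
The plan is to test the momentum equation \eqref{DUE} against $\psi_n$ in $L^2$, so that integration by parts produces $a\|\psi_{nx}\|^2$ as the leading term, and then to show that every remaining contribution either vanishes \uts{} or combines with $a\|\psi_{nx}\|^2$ with a favorable sign. Writing $g_n$ for the left-hand side of \eqref{DUE}, since $g_n\to 0$ in $L^2$ and $\psi_n$ is bounded in $H_0^1\subset L^2$, we have $\l g_n,\psi_n\r\to 0$. Integrating by parts in $x$ (the boundary terms vanish as $\psi_n,\eta_n(s)\in H_0^1$) gives
$$
\l g_n,\psi_n\r=\ii\lambda_n\rho_2\l\tilde\psi_n,\psi_n\r+a\|\psi_{nx}\|^2+\int_0^\infty\mu(s)\l\eta_{nx}(s),\psi_{nx}\r\,\d s+\kappa\l\varphi_{nx}+\psi_n,\psi_n\r.
$$

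The two easy terms come first. Using \eqref{UNO} in the form $\tilde\psi_n=\ii\lambda_n\psi_n+o(1)$ in $H_0^1$, one finds $\ii\lambda_n\rho_2\l\tilde\psi_n,\psi_n\r=-\rho_2\|\lambda_n\psi_n\|^2+o(1)$; but $\|\lambda_n\psi_n\|=\|\tilde\psi_n\|+o(1)\to 0$ by Lemma~\ref{lemmav}, so this inertial term is $o(1)$. Moreover $\|\psi_n\|=\|\lambda_n\psi_n\|/|\lambda_n|\to 0$ (numerator vanishing, denominator bounded away from $0$ or tending to $\infty$ according as $\lambda_\star$ is finite or infinite), while $\|\varphi_{nx}+\psi_n\|$ is bounded; hence the coupling term $\kappa\l\varphi_{nx}+\psi_n,\psi_n\r$ is $o(1)$ as well.

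The heart of the matter — and the main obstacle — is the memory term, since Lemma~\ref{lemmaprim} controls $\eta_n$ only on $\II$, whereas the integral runs over all of $\R^+$. I will bypass this by inserting the explicit representation \eqref{save}. The $\tilde\psi_n$-part yields $\frac{m_n}{\ii\lambda_n}\l\tilde\psi_{nx},\psi_{nx}\r$, where $m_n=\int_0^\infty\mu(s)(1-\e^{-\ii\lambda_n s})\,\d s$ satisfies $|m_n|\le 2m$; invoking \eqref{UNO} at the level of gradients, $\l\tilde\psi_{nx},\psi_{nx}\r=\ii\lambda_n\|\psi_{nx}\|^2+o(1)$, so (using $|\lambda_n|^{-1}$ bounded for the error) this piece equals $m_n\|\psi_{nx}\|^2+o(1)$. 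The $\xi_n$-part is dominated in modulus by $\|\psi_{nx}\|\int_0^\infty\mu(s)\int_0^s\|\xi_{nx}(\sigma)\|\,\d\sigma\,\d s$, which by Lemma~\ref{lemmaTEO} is at most $\tx\frac{\sqrt{4Cm}}\delta\|\psi_{nx}\|\|\xi_n\|_\M\to 0$ thanks to \eqref{TRE}. Thus the whole memory term equals $m_n\|\psi_{nx}\|^2+o(1)$.

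Collecting the four contributions, $(a+m_n)\|\psi_{nx}\|^2=o(1)$. The decisive point is the sign: taking real parts, $\Re m_n=\int_0^\infty\mu(s)(1-\cos\lambda_n s)\,\d s\ge 0$, so $a\|\psi_{nx}\|^2\le(a+\Re m_n)\|\psi_{nx}\|^2=o(1)$, whence $\|\psi_{nx}\|\to 0$ \uts. In other words, the memory term reinforces the elastic term $a\|\psi_{nx}\|^2$ rather than competing with it; establishing this nonnegativity (and the cancellations that reveal it) is the delicate part, while the rest of the argument is routine.
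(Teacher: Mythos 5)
Your proof is correct, but it follows a genuinely different route from the paper's. The paper never touches equation \eqref{DUE} at this stage: it pairs the representation formula for $\eta_n$ with $\psi_n$ in the restricted space ${\mathcal S}=L^2_\mu(\II;H_0^1)$, uses Lemma~\ref{lemmaprim} (i.e., the dissipation) to force $\l\eta_n,\psi_n\r_{\mathcal S}\to 0$, and then concludes because the coefficient $\alpha_n=\int_\II\mu(s)(1-\e^{-\ii\lambda_n s})\,\d s$ multiplying $\|\psi_{nx}\|^2$ stays away from zero --- a fact already secured in the proof of Lemma~\ref{lemmav}. You instead test \eqref{DUE} against $\psi_n$, extract $a\|\psi_{nx}\|^2$ from the elastic term, and feed the representation formula into the full memory integral over $\R^+$; your conclusion rests on $a>0$ together with $\Re m_n=\int_0^\infty\mu(s)(1-\cos\lambda_n s)\,\d s\geq 0$, so no lower bound on an $\alpha_n$-type quantity (and no appeal to the set $\II$ or to Lemma~\ref{lemmaprim}) is needed. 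The trade-off is that you must invoke Lemma~\ref{lemmav} to kill the inertial term $\ii\lambda_n\rho_2\l\tilde\psi_n,\psi_n\r$ and the coupling term, which the paper's proof of this lemma does not use directly; both arguments dispose of the $\xi_n$-contribution identically, via Lemma~\ref{lemmaTEO} and \eqref{TRE}. Your sign observation (the memory term reinforces rather than competes with $a\|\psi_{nx}\|^2$) is a nice structural point that makes the step insensitive to the value of $\lambda_\star$, whereas the paper's version is shorter given the groundwork already laid in Lemma~\ref{lemmav}; all the individual steps you take (boundedness of $|m_n|/|\lambda_n|$, vanishing of $\|\psi_n\|$, the application of Lemma~\ref{lemmaTEO}) check out.
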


\begin{proof}
Define
$$\zeta_n(s)=\frac{1}{\ii\lambda_n}
(1-\e^{-\ii\lambda_n s})(\tilde\psi_n-\ii\lambda_n \psi_n).
$$
By means of \eqref{UNO},
it is apparent that
$\zeta_n\to 0$ in $\M$ and, thanks to \eqref{save},
\begin{equation}
\label{save2}
\eta_n(s)=
(1-\e^{-\ii\lambda_n s})\psi_n
+\int_0^s \e^{-\ii\lambda_n (s-\sigma)}\xi_n(\sigma)\,\d\sigma
+\zeta_n(s),
\end{equation}
which, on account of~Lemma~\ref{lemmaprim}, entails
$$
\l\eta_n,\psi_n\r_{\mathcal S}-\l\zeta_n,\psi_n\r_{\mathcal S}
=\alpha_n\|\psi_{nx}\|^2
+\gamma_n\to 0,
$$
with $\alpha_n$ as above and
$$\gamma_n=\int_\II \mu(s)\int_0^s \e^{-\ii\lambda_n (s-\sigma)}
\l\xi_{nx}(\sigma),\psi_{nx}\r\,\d\sigma\d s.
$$
An application of Lemma \ref{lemmaTEO} gives
$$
|\gamma_n|\leq
\tx\frac{\sqrt{4mC}}{\delta}\|\psi_{nx}\|\|\xi_n\|_\M\to 0.
$$
Knowing that $\alpha_n$ remains away from zero,
we conclude that $\|\psi_{nx}\|\to 0$.
\end{proof}

\begin{lemma}
\label{lemmatrio}
The convergence $\|\eta_n\|_\M\to 0$ holds \uts.
\end{lemma}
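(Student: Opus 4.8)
\subsection*{Proof proposal}

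The plan is to read the convergence off directly from the representation formula~\eqref{save2}, estimating separately the $\M$-norm of each of its three summands. Two of the ingredients are already in hand: the tail term $\zeta_n$ tends to $0$ in $\M$ by construction (as noted in the proof of Lemma~\ref{lemmaduo}), and $\xi_n\to0$ in $\M$ is exactly the convergence~\eqref{TRE}; the third is $\|\psi_{nx}\|\to0$, just obtained in Lemma~\ref{lemmaduo}. Observe that, unlike the previous two lemmas, this step needs no information on $\lambda_\star$ and no further refinement of the subsequence: starting from
$$\eta_n=(1-\e^{-\ii\lambda_n s})\psi_n+\int_0^s\e^{-\ii\lambda_n(s-\sigma)}\xi_n(\sigma)\,\d\sigma+\zeta_n(s),$$
the triangle inequality in $\M$ reduces the claim to three independent estimates.

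The first and third summands are immediate. Since $|1-\e^{-\ii\lambda_n s}|\le2$ and $\int_0^\infty\mu(s)\,\d s=m$, the first obeys $\big\|(1-\e^{-\ii\lambda_n s})\psi_n\big\|_\M\le 2\sqrt{m}\,\|\psi_{nx}\|\to0$, while the third is already known to vanish in $\M$. Thus everything hinges on the convolution term $B_n(s)=\int_0^s\e^{-\ii\lambda_n(s-\sigma)}\xi_n(\sigma)\,\d\sigma$.

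For $B_n$ the pointwise bound $\|B_{nx}(s)\|\le\int_0^s\|\xi_{nx}(\sigma)\|\,\d\sigma$ turns the task into controlling $\int_0^\infty\mu(s)\big(\int_0^s\|\xi_{nx}(\sigma)\|\,\d\sigma\big)^2\,\d s$, a quadratic analogue of Lemma~\ref{lemmaTEO}. The plan is to rerun its weighted argument: splitting $1=\e^{-\frac\delta4(s-\sigma)}\e^{\frac\delta4(s-\sigma)}$ and using the Cauchy--Schwarz inequality in $\sigma$ gives
$$\Big(\int_0^s\|\xi_{nx}(\sigma)\|\,\d\sigma\Big)^2\le\frac2\delta\int_0^s\e^{\frac\delta2(s-\sigma)}\|\xi_{nx}(\sigma)\|^2\,\d\sigma;$$
multiplying by $\mu(s)$, integrating in $s$, exchanging the order of integration by Fubini, and finally invoking~\eqref{NEC} in the form $\mu(s)\le C\e^{-\delta(s-\sigma)}\mu(\sigma)$ to absorb the growing factor $\e^{\frac\delta2(s-\sigma)}$, one is left with
$$\|B_n\|_\M\le\frac{2\sqrt{C}}{\delta}\,\|\xi_n\|_\M\to0.$$
Adding the three contributions yields $\|\eta_n\|_\M\to0$. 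I expect the only genuine obstacle to be this last estimate: the pointwise bound produces the \emph{square} of an integral, so Lemma~\ref{lemmaTEO} cannot be quoted verbatim, and the sign in the Cauchy--Schwarz split must be chosen precisely so that~\eqref{NEC} cancels the growing exponential upon integration.
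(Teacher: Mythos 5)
Your proof is correct and follows essentially the same route as the paper: both start from the decomposition~\eqref{save2} and control the convolution term via the weighted splitting $\e^{\mp\frac\delta4(s-\sigma)}$ combined with~\eqref{NEC}, arriving at the same constant $2\sqrt{C}/\delta$. The only (cosmetic) difference is that you apply the triangle inequality to the three summands and bound $\|B_n\|_\M$ directly through a quadratic version of the estimate, whereas the paper bounds $\|\eta_n\|_\M^2$ by pairing~\eqref{save2} against $\eta_n$ and then divides by $\|\eta_n\|_\M$, using the bilinear form of Lemma~\ref{lemmaTEO}.
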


\begin{proof}
Making use of \eqref{save2},
we easily obtain the estimate
$$\|\eta_n\|_\M^2\leq 2\sqrt{m}\,\|\psi_{nx}\|\|\eta_n\|_\M
+\int_0^\infty \mu(s)\|\eta_{nx}(s)\|\int_0^s\|\xi_{nx}(\sigma)\|\,\d \sigma\d s
+\|\zeta_n\|_\M\|\eta_n\|_\M.
$$
Arguing exactly as in Lemma \ref{lemmaTEO}, we see that
$$\int_0^\infty \mu(s)\|\eta_{nx}(s)\|\int_0^s\|\xi_{nx}(\sigma)\|\,\d \sigma\d s
\leq \tx \frac{\sqrt{4C}}\delta\|\eta_n\|_\M\|\xi_n\|_\M.
$$
Consequently,
$$\|\eta_n\|_\M\leq 2\sqrt{m}\,\|\psi_{nx}\|
+\tx \frac{\sqrt{4C}}\delta\|\xi_n\|_\M
+\|\zeta_n\|_\M\to 0
$$
on account of Lemma~\ref{lemmaduo}.
\end{proof}

At this point, we introduce the sequence of functions
$$
F_n(x)= a\psi_{nx}(x) + \int_0^\infty \mu(s)\eta_{nx}(x,s)\, \d s.
$$
An asymptotic control of certain boundary terms will be needed.

\begin{lemma}
\label{bordo}
The convergence
$F_n(x)\overline{\varphi_{nx}(x)}\to 0$ holds {\uts} for $x=0$ and $x=\ell$.
\end{lemma}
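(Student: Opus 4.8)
The plan is to localize the two boundary contributions by testing the field equations against a weight and then to appeal to the smallness already obtained for the $\psi$- and $\eta$-components. Fix a smooth $q$ on $\I$ with, say, $q(0)=1$ and $q(\ell)=0$ (the endpoint $x=\ell$ is handled symmetrically, interchanging the roles of the two ends), and note that since $\psi_n$ vanishes at the boundary the value $\varphi_{nx}(0)$ coincides with the shear $w_n=\varphi_{nx}+\psi_n$ at $x=0$. A preliminary reduction will be convenient: from Lemmas~\ref{lemmaduo} and \ref{lemmatrio}, together with the Cauchy--Schwarz bound $\|\int_0^\infty\mu(s)\eta_{nx}(s)\,\d s\|\leq\sqrt m\,\|\eta_n\|_\M$, the sequence $F_n=a\psi_{nx}+\int_0^\infty\mu(s)\eta_{nx}(s)\,\d s$ converges to zero in $L^2(\I)$. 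This disposes of every term in which $F_n$ is paired with an $L^2$-bounded factor.

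Next I would rewrite \eqref{cinque} as $\kappa(\varphi_{nx}+\psi_n)_x=\ii\lambda_n\rho_1\tilde\varphi_n+o(1)$ in $L^2$, multiply by $q\overline{F_n}$, and integrate over $\I$. Moving the derivative off $w_{nx}$ and using $q(\ell)=0$ isolates, on the left-hand side, precisely the sought boundary quantity $\kappa\,F_n(0)\overline{\varphi_{nx}(0)}$ (up to conjugation and sign), balanced by the interior integrals $\int q'\,\overline{F_n}\,w_n$, the forcing term $\int q\,\overline{F_n}\,f_n$, the term $\int q\,\overline{F_{nx}}\,w_n$, and the frequency-loaded term $\ii\lambda_n\rho_1\int q\,\overline{F_n}\,\tilde\varphi_n$. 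The first two tend to zero {\uts} because $F_n\to0$ in $L^2$ while $w_n$ and the $o(1)$-datum are bounded.

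The hard part is the last two integrals, which feel the frequency. The factor $\overline{F_{nx}}$ is not controlled in $L^2$ a priori, and the explicit $\lambda_n$ in front of $\int q\,\overline{F_n}\,\tilde\varphi_n$ is genuinely dangerous in the regime $\lambda_n\to\pm\infty$; indeed $\lambda_nF_n$ does \emph{not} vanish, as \eqref{TRE} and the representation~\eqref{save2} express it through $\tilde\psi_{nx}$, $\int\mu'\eta_{nx}$ and $\xi_n$, none of which is small by itself. My strategy would therefore be not to estimate these terms in isolation but to produce the same high-frequency contribution from the companion equation \eqref{DUE}, which controls $F_{nxx}$ in $H^{-1}$: the equal-speed condition~\eqref{rivera}, $\rho_1/\kappa=\rho_2/b$, is exactly what makes the leading $\lambda_n$-part coming from the $\varphi$-equation match the one coming from the $\psi$-equation, so that the two cancel rather than being bounded. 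After this cancellation, everything that survives is paired with a factor already shown to vanish {\uts} (namely $\psi_{nx}$, $\eta_n$ in $\mathcal S$, $\tilde\psi_n$, and $\xi_n$), so the entire right-hand side tends to zero and $F_n(0)\overline{\varphi_{nx}(0)}\to0$; the case $x=\ell$ is identical. I expect the cancellation forced by~\eqref{rivera} to be the delicate step, and the bookkeeping of the $\overline{F_{nx}}$ term via \eqref{DUE} to be the most calculation-heavy one.
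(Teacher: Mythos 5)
Your preliminary reduction ($F_n\to0$ in $L^2(\mathfrak{I})$, via Lemmas~\ref{lemmaduo} and \ref{lemmatrio}) is correct and is also the paper's starting point, and your instinct that \eqref{rivera} forces a cancellation of the $\lambda_n$-loaded cross terms is sound. But the argument as designed does not close. If you test \eqref{cinque} against $q\overline{F_n}$ and then, as you propose, eliminate the uncontrolled factor $\overline{F_{nx}}$ by substituting $F_{nx}=\ii\lambda_n\rho_2\tilde\psi_n+\kappa(\varphi_{nx}+\psi_n)+o(1)$ from \eqref{DUE}, the two frequency-loaded terms do indeed reduce (using \eqref{quattro}, \eqref{UNO}, \eqref{TRE} and Lemma~\ref{lemmav}) to $\ii\lambda_n(\rho_1 b-\rho_2\kappa)\int q\,\varphi_{nx}\overline{\tilde\psi_n}$, which vanishes by \eqref{rivera} --- but the substitution also produces the interior term $\kappa^2\int_0^\ell q\,|\varphi_{nx}+\psi_n|^2\,\d x$, which does \emph{not} tend to zero at this stage of the proof: after Lemmas~\ref{lemmav}--\ref{lemmatrio} one only knows $\kappa\|\varphi_{nx}+\psi_n\|^2+\rho_1\|\tilde\varphi_n\|^2\to1$, and the vanishing of $\|\varphi_{nx}+\psi_n\|$ is precisely the content of the later Lemma~\ref{lemmaMisto}, whose proof in turn requires the present lemma. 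So your single identity contains two unknown quantities --- the boundary product and the weighted interior shear energy --- and cannot determine either; the scheme is circular.

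The paper avoids this by decoupling the two factors of the product. It tests the $\psi$-equation \eqref{DUE} (not the $\varphi$-equation) against $Q_n=qF_n$ with $q(0)=-q(\ell)=1$: the dangerous term $\langle F_{nx},Q_n\rangle$ then has the perfect-derivative structure $2\Re\langle F_{nx},qF_n\rangle=\int q\,\tfrac{\d}{\d x}|F_n|^2$, so its real part equals $-|F_n(0)|^2-|F_n(\ell)|^2-\langle q'F_n,F_n\rangle$, i.e.\ the boundary contributions appear as \emph{signed squares of $F_n$ alone} and one concludes $F_n(0),F_n(\ell)\to0$; the term $\kappa\langle\varphi_{nx}+\psi_n,Q_n\rangle$ is harmless because $F_n\to0$ in $L^2$, and $\Re\,\ii\lambda_n\langle\tilde\psi_n,Q_n\rangle\to0$ is obtained by moving $\ii\lambda_n$ onto $F_n$ via \eqref{UNO} and \eqref{TRE}. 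A second, independent multiplier computation ($q\varphi_{nx}$ against \eqref{cinque}) shows $|\varphi_{nx}(0)|^2+|\varphi_{nx}(\ell)|^2$ is merely \emph{bounded}, and the lemma follows as (vanishing)$\times$(bounded). Note in particular that the paper's proof of Lemma~\ref{bordo} makes no use of \eqref{rivera}; the equal-speed cancellation you describe is deferred to Lemma~\ref{lemmaMisto}, where it can be exploited safely because the boundary term has already been disposed of.
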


\begin{proof}
The argument follows the lines of \cite{MRFS}. Accordingly,
choose a real function $q\in\C^1(\mathfrak{I})$ satisfying
$q(0)=-q(\ell)=1$. A multiplication of \eqref{DUE} by
$$
Q_n(x)= q(x) F_n(x)
$$
yields the convergence
$$
\ii\lambda_n\rho_2 \l \tilde\psi_n,Q_n \r-\l F_{nx},Q_n\r + \kappa \l \varphi_{nx}+\psi_n,Q_n \r \to 0.
$$
By Lemmas \ref{lemmaduo} and \ref{lemmatrio},
it is readily seen that $F_n\to0$ in $L^2(\mathfrak{I})$, thus
$$
\kappa \l \varphi_{nx}+\psi_n,Q_n \r \to 0.
$$
We claim that
$$
\Re \ii\lambda_n \l \tilde\psi_n,Q_n \r \to 0.
$$
Indeed, exploiting \eqref{UNO} we get
$$
\ii\lambda_n\l q \tilde\psi_n, a \psi_{nx} \r
= -a \l q \tilde\psi_n, \ii\lambda_n \psi_{nx} \r
= -a \l q \tilde\psi_n, \tilde\psi_{nx} \r + \eps_n
$$
for some complex sequence $\eps_n\to0$, whereas \eqref{TRE} and an integration by parts yield
(the boundary terms disappear as in \cite{Terreni})
\begin{align*}
\ii\lambda_n\int_0^\infty \mu(s) \l q \tilde\psi_n,\eta_{nx}(s)\r \, \d s
&=-\int_0^\infty \mu(s) \l q \tilde\psi_n,\ii\lambda_n\eta_{nx}(s)\r\, \d s  \\
&= - \int_0^\infty \mu'(s)
\l q\tilde\psi_n,\eta_{nx}(s) \r \,\d s -m \l q \tilde\psi_n, \tilde\psi_{nx} \r + \nu_n
\end{align*}
for some complex sequence $\nu_n\to0$.
Collecting the two identities, we obtain
$$
\ii\lambda_n\l\tilde\psi_n, Q_n \r=
-b \l q \tilde\psi_n, \tilde\psi_{nx} \r - \int_0^\infty \mu'(s)
\l q\tilde\psi_n,\eta_{nx}(s) \r \,\d s + \eps_n + \nu_n.
$$
Using Lemma \ref{lemmav} and integrating by parts, we infer that
$$
2\Re \l q \tilde\psi_n, \tilde\psi_{nx}\r =\int_0^\ell q(x) {\frac{\d}{\d x}}|\tilde\psi_n(x)|^2\, \d x =
-\l q' \tilde\psi_n, \tilde\psi_n\r \to 0.
$$
Besides, invoking \eqref{MUprimo},
$$
\int_0^\infty \mu'(s)  \l q\tilde\psi_n,\eta_{nx}(s) \r \,\d s\to 0,
$$
and the claim is established. Summarizing,
we arrive at the convergence
$$
\Re \l F_{nx}, Q_n \r \to 0.
$$
Writing
$$
-2\Re \l F_{nx}, Q_n \r=
|F_n(\ell)|^2 + |F_n(0)|^2+\l q' F_n, F_n \r,
$$
and noting that the last term in the right-hand side vanishes as $n\to\infty$,
we reach the conclusion
$$
|F_n(\ell)|^2 + |F_n(0)|^2 \to 0.
$$
To finish the proof it is now enough showing that the sequence
$$
|\varphi_{nx}(\ell)|^2 + |\varphi_{nx}(0)|^2
= - 2 \Re \l q\varphi_{nx},\varphi_{nxx}\r - \l q'\varphi_{nx},\varphi_{nx} \r
$$
is bounded. So is certainly the second term in the right-hand side
above. Concerning the first one,
multiplying~\eqref{cinque} by $q \varphi_{nx}$,
and taking advantage of~\eqref{quattro} and Lemma~\ref{lemmaduo}, we have
$$
-\rho_1 \l q' \tilde\varphi_n,\tilde\varphi_n\r + 2\kappa\,
\Re \l q\varphi_{nx},\varphi_{nxx}\r
=2\rho_1 \Re \l q\tilde\varphi_n,\tilde\varphi_{nx}\r + 2\kappa\,
\Re \l q\varphi_{nx},\varphi_{nxx}\r \to 0.
$$
Since $\l q' \tilde\varphi_n,\tilde\varphi_n\r$ is
bounded, the same is true for $\Re \l q\varphi_{nx},\varphi_{nxx}\r$.
\end{proof}

\begin{remark}
Observe that, within the Neumann boundary condition for either $\varphi$ or $\psi$,
Lemma~\ref{bordo} is trivially true.
\end{remark}

\begin{lemma}
\label{lemmaMisto}
Within condition \eqref{rivera}, the convergence
$\|\varphi_{nx}+\psi_n\|\to 0$ holds \uts.
\end{lemma}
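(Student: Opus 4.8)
The plan is to bound $\kappa\|\varphi_{nx}+\psi_n\|^2$ by a resolvent multiplier in which the second equation \eqref{DUE} produces the shear force while the first equation \eqref{cinque} feeds in the rotation variable, so that the equal-speed relation \eqref{rivera} can cancel the leading high-frequency contributions. Write $g_n=\varphi_{nx}+\psi_n$, let $p_n\to0$ and $r_n\to0$ be the expressions in \eqref{cinque} and \eqref{DUE}, and recall $\xi_n=\ii\lambda_n\eta_n-T\eta_n-\tilde\psi_n\to0$ from \eqref{TRE}. Solving \eqref{DUE} for $\kappa g_n$ and pairing with $g_n$,
\[
\kappa\|g_n\|^2=\Re\l F_{nx},g_n\r-\rho_2\Re\l\ii\lambda_n\tilde\psi_n,g_n\r+o(1).
\]
First I integrate by parts in $\l F_{nx},g_n\r$; since $\psi_n$ vanishes at the endpoints, the boundary contribution is exactly $F_n(\ell)\overline{\varphi_{nx}(\ell)}-F_n(0)\overline{\varphi_{nx}(0)}$, which tends to zero by Lemma~\ref{bordo}. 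Replacing $g_{nx}$ through \eqref{cinque} and using $\|F_n\|\to0$, the surviving contribution is $-\frac{\rho_1}{\kappa}\Re\l F_n,\ii\lambda_n\tilde\varphi_n\r+o(1)$.

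Next I substitute $\tilde\varphi_n=\ii\lambda_n\varphi_n-u_n$ and $\tilde\psi_n=\ii\lambda_n\psi_n-w_n$ from \eqref{quattro} and \eqref{UNO}, whose errors $u_n,w_n$ vanish in $H_0^1$. This splits the expression into a quadratic part (carrying $\lambda_n^2$) and a linear part (carrying $\lambda_n$). Integrating $F_n=a\psi_{nx}+\int_0^\infty\mu\,\eta_{nx}\,\d s$ by parts against $\varphi_n$ reduces the quadratic part to multiples of $\l\psi_n,\varphi_{nx}\r$ and $\l\int\mu\eta_n,\varphi_{nx}\r$, and here \eqref{rivera} is decisive: the coefficient of $\l\psi_n,\varphi_{nx}\r$ collapses from $\rho_2-\frac{\rho_1a}{\kappa}$ to $\frac{\rho_2 m}{b}$ (because $b-a=m$), leaving
\[
D_n:=\frac{\rho_2}{b}\,\lambda_n^2\,\Re\Big\langle m\psi_n-\int_0^\infty\mu(s)\eta_n(s)\,\d s,\ \varphi_{nx}\Big\rangle.
\]
The linear part assembles into $\Re(\ii\lambda_n\l w_n,g_n\r)$ and $\Re(\ii\lambda_n\l F_n,u_n\r)$. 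Both vanish after integration by parts, using that $\lambda_n\varphi_n$ is bounded in $L^2$ (as $\ii\lambda_n\varphi_n=\tilde\varphi_n+u_n$), that $u_{nx},w_{nx},\lambda_n\psi_n\to0$, and a single application of \eqref{TRE} to $\lambda_n\int\mu\eta_n$.

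The decisive step is $D_n\to0$, where the memory equation does the work. Applying $\ii\lambda_n$ to \eqref{TRE}, integrating against $\mu$ and using $\int_0^\infty\mu\,T\eta_n\,\d s=\int_0^\infty\mu'\eta_n\,\d s$, I obtain
\[
\lambda_n^2\int_0^\infty\mu(s)\eta_n(s)\,\d s=-\ii\lambda_n\Big(\int_0^\infty\mu\,\xi_n\,\d s+\int_0^\infty\mu'\eta_n\,\d s+m\,\tilde\psi_n\Big).
\]
Substituting into $D_n$, the term $m\lambda_n^2\psi_n$ combines with $\ii\lambda_n m\tilde\psi_n$ to give $-\ii m\lambda_n w_n$ (since $\ii\lambda_n\tilde\psi_n=-\lambda_n^2\psi_n-\ii\lambda_n w_n$), so the dangerous $\lambda_n^2$ cancels and
\[
D_n=\frac{\rho_2}{b}\,\Re\big(\ii\lambda_n\l H_n,\varphi_{nx}\r\big),\qquad H_n=-m\,w_n+\int_0^\infty\mu\,\xi_n\,\d s+\int_0^\infty\mu'\eta_n\,\d s.
\]
Now $H_n\in H_0^1$ with $\|H_{nx}\|\to0$: indeed $w_{nx}\to0$, $\|\int\mu\,\xi_{nx}\,\d s\|\le\sqrt m\,\|\xi_n\|_\M\to0$, and by Cauchy--Schwarz $\|\int\mu'\eta_{nx}\,\d s\|\le\sqrt{\mu(0)}\,\big(-\int_0^\infty\mu'\|\eta_{nx}\|^2\,\d s\big)^{1/2}\to0$ on account of \eqref{MUprimo}. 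A last integration by parts, $\ii\lambda_n\l H_n,\varphi_{nx}\r=-\ii\l H_{nx},\lambda_n\varphi_n\r$, then forces $D_n\to0$ because $\lambda_n\varphi_n$ is bounded; whence $\kappa\|g_n\|^2\to0$.

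The main obstacle is exactly the high-frequency regime $|\lambda_n|\to\infty$, where the individual terms $\lambda_n^2\l\psi_n,\varphi_{nx}\r$ are of indeterminate size. It is defeated by a three-fold mechanism: \eqref{rivera} cancels the leading shear--rotation coupling, the \emph{second-order} use of \eqref{TRE} absorbs the surviving $\lambda_n^2$ into the memory dissipation $-\int\mu'\|\eta_{nx}\|^2\,\d s$ controlled by \eqref{MUprimo}, and the boundedness of $\lambda_n\varphi_n$ against the $H_0^1$-null remainders $u_n,w_n,\xi_n$ closes the estimate.
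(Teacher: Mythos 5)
Your argument is correct and is essentially the paper's own proof: the same multiplier $\varphi_{nx}+\psi_n$ applied to \eqref{DUE}, the same integration by parts with the boundary term killed by Lemma~\ref{bordo}, the same injection of $\ii\lambda_n\tilde\varphi_n$ via \eqref{cinque}, and the same cancellation driven by \eqref{rivera}, \eqref{TRE} and \eqref{MUprimo} --- you merely carry out the two key substitutions in the opposite order (rewriting everything as $\lambda_n^2$ times the displacement variables and only then invoking \eqref{TRE}, where the paper first passes to the velocity variables so that the coefficient of $\l\tilde\psi_n,\tilde\varphi_{nx}\r$ vanishes outright). The only detail you gloss over is the term $\rho_2\lambda_n^2\|\psi_n\|^2$ coming from $\l\psi_n,\varphi_{nx}+\psi_n\r$, which tends to zero by \eqref{UNO} and Lemma~\ref{lemmav}.
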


\begin{proof}
Multiplying \eqref{DUE} by $\varphi_{nx}+\psi_n$ and exploiting \eqref{cinque}, we obtain
$$
\kappa\|\varphi_{nx}+\psi_n\|^2 -F_n \overline{\varphi_{nx}} \big|_{0}^{\ell}
+ \ii\lambda_n\rho_2 \l \tilde\psi_n,\varphi_{nx}+\psi_n\r
- \ii\lambda_n\frac{\rho_1}{\kappa} \l F_n, \tilde\varphi_n\r
\to 0.
$$
The boundary term goes to zero by Lemma~\ref{bordo}. Besides,
multiplying \eqref{UNO} by $\tilde\psi_n$ and applying Lemma \ref{lemmav}, we learn that
$$
\ii\lambda_n\rho_2 \l \tilde\psi_n,\psi_n\r\to 0,
$$
and we deduce the convergence
$$
\kappa\|\varphi_{nx}+\psi_n\|^2+ \ii\lambda_n\rho_2 \l \tilde\psi_n,\varphi_{nx}\r
- \ii\lambda_n\frac{\rho_1}{\kappa} \l F_n, \tilde\varphi_n\r
\to 0.
$$
Finally, using \eqref{rivera}, \eqref{quattro}, \eqref{UNO} and \eqref{TRE}, we have
\begin{align*}
&\ii\lambda_n\rho_2 \l \tilde\psi_n,\varphi_{nx}\r- \ii\lambda_n\frac{\rho_1}{\kappa} \l F_n, \tilde\varphi_n\r\\
&=-\rho_2 \l \tilde\psi_n,\ii\lambda_n\varphi_{nx}\r
-\frac{b\rho_1}{\kappa} \l \ii\lambda_n\psi_{nx}, \tilde\varphi_n\r
-\frac{\rho_1}{\kappa}\int_0^\infty \mu(s)
\l \ii\lambda_n \eta_{nx}(s)-\ii\lambda_n\psi_{nx},\tilde\varphi_n\r\, \d s\\
&= b\Big(\frac{\rho_1 }{\kappa} -\frac{\rho_2}{b} \Big)\l \tilde\psi_n,\tilde\varphi_{nx}\r
-\frac{\rho_1}{\kappa}\int_0^\infty \mu'(s) \l \eta_{nx}(s),\tilde\varphi_n\r\, \d s + \eps_n\\
&= -\frac{\rho_1}{\kappa}\int_0^\infty \mu'(s) \l \eta_{nx}(s),\tilde\varphi_n\r\, \d s + \eps_n,
\end{align*}
for some complex sequence $\eps_n\to 0$.
Since \eqref{MUprimo} bears
$$
\int_0^\infty \mu'(s) \l \eta_{nx}(s),\tilde\varphi_n\r\, \d s\to 0,
$$
the proof is finished.
\end{proof}

On account of Lemmas \ref{lemmav}, \ref{lemmaduo}, \ref{lemmatrio} and \ref{lemmaMisto},
the sought contradiction is attained once we prove the convergence
$\|\tilde\varphi_n\|\to 0$ \uts.
To this end, a multiplication of \eqref{cinque} by $\varphi_n$ will do.
\qed



\end{document}